\newcommand{\vertiii}[1]{{\left\vert\kern-0.25ex\left\vert\kern-0.25ex\left\vert #1 
    \right\vert\kern-0.25ex\right\vert\kern-0.25ex\right\vert}}
\def\text{\textrm}
 \newtheorem{theorem}{Theorem}[section]
 \newtheorem{lemma}[theorem]{Lemma}
 \newtheorem{proposition}[theorem]{Proposition}
 \theoremstyle{definition}
 \newtheorem{definition}[theorem]{Definition}
 \theoremstyle{remark}
 \newtheorem{rem}[theorem]{Remark}
 \numberwithin{equation}{section}
\begin{document}

\title[Strong ISS for linear systems]{Strong input-to-state stability for infinite dimensional linear systems}

\date{}

\author[R.~Nabiullin]{Robert Nabiullin}
\address{Functional analysis group, School of Mathematics and Natural Sciences,
        University of Wuppertal, D-42119 Wuppertal, Germany} 
\email{nabiullin@math.uni-wuppertal.de}
\author[F.L.~Schwenninger]{Felix L.~Schwenninger}
\address{Department of Mathematics, Center for Optimization and Approximation, University of Hamburg, 
Bundesstra\ss e 55, D-20146 Hamburg, Germany}
\email{felix.schwenninger@uni-hamburg.de}
\thanks{RN is supported by Deutsche Forschungsgemeinschaft (Grant JA 735/12-1). FLS is supported by Deutsche Forschungsgemeinschaft (Grant RE 2917/4-1)}

\begin{abstract}
This paper deals with strong versions of input-to-state stability and integral input-to-state stability of infinite-dimensional linear systems with an unbounded input operator. We show that infinite-time admissibility with respect to inputs in an Orlicz space is a sufficient condition for a system to be strongly integral input-to-state stable but, unlike in the case of exponentially stable systems, not a necessary one.
\end{abstract}

\keywords{
input-to-state stability, integral input-to-state stability, $C_0$-semi\-group, infinite-time admissibility, Orlicz space, infinite-dimensional systems
}
 
\subjclass[2010]{93D20, 93C05, 93C20, 37C75}

\maketitle

\section{Introduction}
\label{intro}
{\it Input-to-state stability (ISS)} is a well-studied notion in the analysis of (robust) stability for nonlinear ODE systems and goes back to E.~Sontag \cite{Sontag89ISS}, see \cite{sontagISS} for an overview. The PDE case, however, has developed only in the recent past --- see \cite{JaLoRy08} for the first work in that area, \cite{DaM13,KK2016IEEETac,Krstic16,MaPr11,MiI14b}, and \cite{MiW17b} as well as the references therein for the state of the art --- and is still subject to ongoing research. One of the questions is, how variants of ISS known from  finite dimensions \cite{Sontag98} --- especially \textit{integral input-to-state stability (iISS)} --- relate in the infinite-dimensional setting. Surprisingly, even for linear systems this still not fully understood, see \cite{JNPS16}. With this contribution we aim to make a step towards clarifying that case. This will complement the findings in \cite{JNPS16}.

Let us in the following always consider linear control systems of the form 
\begin{equation}\label{eqn1}
\dot{x}(t)=Ax(t)+Bu(t),\quad  t\geq0,\quad x(0)=x_{0},
\end{equation}
where $A$ generates a $C_0$-semigroup  and $B$ is a possibly unbounded input operator. Input-to-state stability enables a way to jointly describe the stability of the mappings $x_{0}\mapsto x(t)$ and $u\mapsto x(t)$ for fixed $t>0$.
The specific ISS notion depends on the `norm' in which the functions $u$ are measured and in which sense the internal stability, that is, the stability of the semigroup, is understood. For the latter we will consider \textit{strong stability} rather than the more restrictive exponential stability, which is somewhat in contrast to the usual setting for (infinite-dimensional) ISS in the literature. This accounts for the notion of {\it strong ISS (sISS)}, which was recently introduced in \cite{MiW17b}.  There, it is also shown that for certain nonlinear systems, sISS is equivalent to the {\it strong asymptotic gain property} together with \textit{uniformly global stability}.

In this article, we show that \textit{strong integral input-to-state stability (siISS)} is implied by sISS with respect to an Orlicz space, Theorem \ref{thm:main2b}. As we are dealing with linear systems, the later is equivalent to \textit{infinite-time admissibility} with respect to some Orlicz space $E_\Phi$ together with strong stability of the semigroup. A link between (integral) ISS and Orlicz-space-admissibility was recently established by Jacob, Partington and the authors in \cite{JNPS16}. There the exponential stability of the semigroup played a significant role in proving that iISS is equivalent to ISS with respect to an Orlicz space, see \cite[Thm.~15]{JNPS16}. However, in the present paper we show that this equivalence is no longer true in the more general situation of siISS and sISS, Theorem \ref{thm:counterex}.

 It is known (and easy to see) that, for linear systems \eqref{eqn1}, iISS
implies ISS with respect to $L^{\infty}$. Whether equivalence holds in general, is still an open problem (and connected to a fundamental open question raised by G.~Weiss on the continuity of mild solutions, see \cite{JNPS16,Weiss89ii}). In certain situations this is known to be true, like, for instance, for parabolic diagonal systems \cite{JNPS16} or, in some sense more general, for
analytic semigroups on Hilbert spaces that are similar to a contraction semigroup  \cite{JacSchwZwa} --- in both cases for finite-dimensional input spaces. Furthermore, equivalence also holds in the case of bounded operators $B$, \cite{MiI14b}.
For sISS and siISS, the situation is different. We show that the implication siISS $\implies$ sISS with respect to $L^{\infty}$ fails in general, even if $B$ is bounded. Hence without exponential stability, strong integral ISS and sISS (with respect to $L^{\infty}$) cannot be equivalent, Theorem \ref{thm:counterex}. 

The reason for the failure of the above equivalences lies in the fact that we have to distinguish between ``finite-time admissibility'' and the stronger ``infinite-time admissibility'' here. This is different to the relation between ISS and iISS. This also shows that for general input-to-state stability of linear systems, the stability concepts of the mappings $x_{0}\mapsto x(t)$ and $u\mapsto x(t)$ may not be viewed separately.

In Section \ref{sec:2} we introduce the class of linear systems and the required stability
concepts we are
dealing with. Section \ref{sec:3} contains our main results.

\section{Admissibility and stability of infinite-dimensional systems}\label{sec:2}
Throughout the whole article we consider linear systems $\Sigma(A,B)$ given by \eqref{eqn1},
where $A$ is the generator of a $C_0$-semigroup $(T(t))_{t\ge 0}$ on a Banach space $X$, $U$ is another Banach space and $B \in \mathcal L(U,X_{-1})$. The space $X_{-1}$ is defined to be the completion of $X$ with respect to the norm given by $\|x\|_{-1} := \|(\lambda I-A)^{-1}x\|$, where $\lambda$ is some element of $\rho(A)$, the resolvent set of $A$. The operator $A$ has a unique extension $A_{-1} \in \mathcal L(X,X_{-1})$ which generates a  $C_0$-semigroup $(T_{-1}(t))_{t\ge 0}$ on $X_{-1}$ which is an extension of $(T(t))_{t\ge 0}$. \\We briefly recall the definitions of Young functions and Orlicz spaces. A function $\Phi:[0,\infty)\rightarrow \mathbb R$ is called a \emph{Young function (or Young function generated by $\varphi$)} if
  \begin{equation*}
    \Phi(t) = \int_0^t \varphi(s) \, ds, \qquad t \geq 0,
  \end{equation*}
where the function $\varphi \colon [0, \infty) \to \mathbb R$  is right-continuous and 
 nondecreasing, $\varphi(0)= 0$, $\varphi(s) >0$ for $s>0$ and
$\lim_{s \to \infty}\varphi(s) = \infty$.
\newline
 Let $I \subset \mathbb R$ be an interval. For a Young function $\Phi$,
 let $L_\Phi(I;U)$ be the set of all equivalence classes (w.r.t.\ equality almost everywhere) of Bochner-measurable functions $u \colon I \to U$ for which there is a $k > 0$ such that
 \begin{equation*}
   \int_I \Phi(k^{-1} \|u(x)\|_{U}) \, dx < \infty.
 \end{equation*}
With the \emph{Luxemburg norm} 
\begin{equation*}
     \|u\|_{\Phi}:= \|u\|_{L_{\Phi}(I;U)}: =\inf\left\{k>0 \Bigm| \int_I \Phi(k^{-1}\| u(x)\|) \,dx \leq 1 \right\},
  \end{equation*}
the space $(L_\Phi (I;U),  \|\cdot\|_\Phi)$ is a Banach space \cite[Thm.~3.9.1]{Kufner}.  If the interval $I \subset \mathbb R$ is bounded, then $L^\infty(I, U)$ is a linear subspace of $L_\Phi(I, U)$. 
\begin{definition}
 For bounded intervals $I \subset \mathbb R$ the space $E_\Phi(I, U)$ is defined as  
\begin{equation*}
    E_\Phi(I, U) = \overline{L^\infty(I, U)}^{\|\cdot\|_{L_{\Phi}(I;U)}}.
  \end{equation*}
  The norm $\|\cdot\|_{E_{\Phi}(I;U)}$ refers to $\|\cdot\|_{L_{\Phi}(I;U)}$.
\end{definition}

 In case $U = \mathbb K$ we write $L_\Phi(I)=L_\Phi (I,\mathbb K)$ and $E_\Phi(I)=E_\Phi (I,\mathbb K)$ for short. The Orlicz spaces generalize the $L^p$ spaces for $1< p< \infty$. More details can be found in \cite{Adams,KrasnRut,Kufner,RaoOrlicz,Zaanen} and also in the appendix of \cite{JNPS16}.

Throughout this paper we use the following convention. By $Z(0,t;U)$ we refer to either a Lebesgue space $L^p(0,t;U)$, with $1\leq p \leq \infty$ or an Orlicz spaces $E_\Phi(0,t;U)$, for some Young function $\Phi$.

\begin{definition}\label{def:iISS}
 We call the  system $\Sigma(A,B)$  {\em (finite-time) admissible with respect to $Z$} (or {\em $Z$-admissible}), if for all $t>0$ and all $u\in Z(0,t;U)$ it holds that
\begin{equation}\label{eq:Admissibility}
	\int_{0}^{t}T_{-1}(s)Bu(s)\ ds \in X.
\end{equation}
\end{definition}
By a \emph{(mild) solution} of \eqref{eqn1} we mean the function defined by the variation of parameters formula
\begin{equation}\label{eqn2}
  x(t)=T(t)x_0+\int_0^t T_{-1}(t-s)B u(s) \, ds, \qquad t \geq 0.
\end{equation}

If $\Sigma(A,B)$ is admissible with respect to $Z$, then all mild solutions of \eqref{eqn1} are $X$-valued and by the closed graph theorem there exists a constant $c(t)$ such that
\begin{equation}\label{eq:adm2}
	\left\|\int_{0}^{t}T_{-1}(s)Bu(s)\ ds\right\| \leq c(t) \|u\|_{Z(0,t;U)}\quad \forall u\in Z(0,t;U).
\end{equation}
Moreover, $\Sigma(A,B)$ is admissible if \eqref{eq:Admissibility} holds for some $t>0$. 

\begin{definition}
We call the  system $\Sigma(A,B)$ {\em infinite-time admissible with respect to $Z$} (or {\em infinite-time $Z$-admissible}), if the system is $Z$-admissible and the optimal constants in \eqref{eq:adm2} satisfy $c_\infty:=\sup_{t>0} c(t)<\infty$.
\end{definition}
  A $C_0$-semigoup $(T(t))_{t \geq 0}$ is called {\em strongly stable}, if $ \lim_{t \to \infty} T(t) x = 0$
holds for all $x \in X$.

\begin{rem}\label{remadm}
Clearly, infinite-time admissibility implies admissibility. Also, if $B$ is a bounded operator from $U$ to $X$, then $\Sigma(A,B)$ is admissible. \newline
If the semigroup $(T(t))_{t\ge 0}$ is exponentially stable, that is, there exist constants $M,\omega>0$ such that
\begin{equation}\label{eqnexp}
\|T(t)\|\le Me^{-\omega t}, \quad t\ge 0,
\end{equation}
then  it is not hard to see that $Z$-infinite-time admissibility is equivalent to $Z$-admis\-sibility \cite[Lem.~8]{JNPS16}. In general, $Z$-admissibility does not imply infinite-time  $Z$-admis\-sibility, not even if $B$ is bounded or if the semigroup is strongly stable, see \cite[Ex.~3.1]{DaM13} for an example with $Z = L^\infty$ or \cite{JacSchn} with $Z= L^2$.
\end{rem}

In the definition below we use the following classes of comparison functions from Lyapunov theory.
\begin{align*} 
\mathcal K ={}& \{\mu \colon \mathbb R_0^+ \rightarrow \mathbb R_0^+ \:|\: \mu(0)=0,\, \mu\text{ continuous and }  \text{strictly increasing}\},\\
 \mathcal K_\infty ={}& \{\theta \in \mathcal K \:|\:  \lim_{x\to\infty} \theta(x)=\infty\},\\
\mathcal{L}={}&\{\gamma \colon \mathbb R_0^+ \rightarrow \mathbb R_0^+  \:|\:\gamma \text{ continuous, } \text{strictly decreasing and } \lim_{t\to\infty}\gamma(t)=0 \}.
\end{align*}

\begin{definition} 
The system $\Sigma(A,B)$  is called {\em strongly input-to-state stable with respect to  $Z$} (or {\em  $Z$-sISS}), if there exist functions $\mu\in \mathcal K$  and $\beta \colon X \times \mathbb R_0^+ \to \mathbb R_0^+$  such that
\begin{enumerate} 
\item $\beta(x, \cdot) \in \mathcal L$ for all $x \in X$, $x \neq 0$ and
\item  for every $t\ge 0$, $x_0\in X$ and  $u\in Z(0,t;U)$ the state  $x(t)$ lies in $X$  and
  \begin{equation}\label{eq:sISS-estimate}
    \left\| x(t)\right\| \le \beta(x_0,t)+  \mu (\|u\|_{Z(0,t;U)}).
  \end{equation}
\end{enumerate} 

The system $\Sigma(A,B)$  is called  {\em strongly integral input-to-state stable with respect to  $Z$} (or {\em $Z$-siISS}), if   there exist  functions $\theta\in \mathcal K_\infty$, $\mu \in {\mathcal K}$ and  $\beta \colon X \times \mathbb R_0^+ \to \mathbb R_0^+$  such that
\begin{enumerate} 
\item $\beta(x, \cdot) \in \mathcal L$ for all $x \in X$, $x \neq 0$ and
\item  for every $t\ge 0$, $x_0\in X$ and  $u\in Z(0,t;U)$ the state $x(t)$ lies in $X$  and
  \begin{equation}
    \label{eq:siISS-estimate}
    \left\| x(t)\right\| \le \beta(x_0,t)+ \theta \left(\int_0^t \mu (\|u(s)\|_{U})ds\right).
  \end{equation}
  For $Z=L^{\infty}$, we will sometimes write siISS instead of $Z$-iISS.
\end{enumerate}
\end{definition}

 \begin{rem}
   The definitions given above generalize the notions of ISS and iISS. It is easy to see that ISS implies sISS and iISS implies siISS.\\
The definition of strong input-to-state stability appeared first in \cite{MiW16}. There the authors have the following additional condition: There is a $\sigma \in \mathcal K_\infty$ such that for all $x \in X$ and $t \geq 0$: \[\beta(x,t) \leq \sigma(\|x\|).\] In our situation of linear systems this condition is redundant. Indeed Proposition \ref{proprel1} below shows that strong ISS implies the strong stability of the semigroup $(T(t))_{t \geq 0}$. By the uniform boundedness principle there is some $M >0$ such that $\|T(t)\|_{\mathcal L(X)} \leq M$. Taking $\sigma(s) = Ms$ yields $\sigma \in \mathcal K_\infty$ and $\beta(x,t) \leq \sigma(\|x\|)$.
 \end{rem}

\begin{proposition}\label{proprel1}
Let $Z$ be either $L^p$, $p\in[1,\infty]$, or $E_\Phi$. Then we have:
\begin{enumerate}[label=(\roman*)]
\item\label{prop:ISSit1} The following are equivalent
\begin{enumerate}
\item \label{prop:ISSit11}$\Sigma(A,B)$ is $Z$-sISS,
\item \label{prop:ISSit12}$\Sigma(A,B)$ is infinite-time $Z$-admissible and $(T(t))_{t\ge 0}$  is strongly stable.
\end{enumerate}
\item \label{prop:ISSit2} If $\Sigma(A,B)$ is  $Z$-siISS,  then the system is $Z$-admissible and $(T(t))_{t\ge 0}$  is strongly stable.
\end{enumerate}
\end{proposition}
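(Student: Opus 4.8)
The plan is to settle the elementary implications common to all three statements first, and then to prove the two nontrivial directions of part~\ref{prop:ISSit1}.

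\emph{Strong stability and $Z$-admissibility.} In each of the three assertions, strong stability of $(T(t))_{t\ge0}$ is obtained by setting $u\equiv0$: since $\mu(0)=0$ (and $\theta(0)=0$ in the siISS case), \eqref{eq:sISS-estimate} resp.\ \eqref{eq:siISS-estimate} collapses to $\|T(t)x_0\|=\|x(t)\|\le\beta(x_0,t)$ for all $x_0\in X$, $t\ge0$, and for $x_0\neq0$ the right-hand side tends to $0$ because $\beta(x_0,\cdot)\in\mathcal L$ (for $x_0=0$ there is nothing to show). By the uniform boundedness principle there is then $M\ge1$ with $\|T(t)\|\le M$ for all $t\ge0$. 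For $Z$-admissibility, set $x_0=0$: the requirement in \eqref{eq:sISS-estimate} resp.\ \eqref{eq:siISS-estimate} that $x(t)$ lie in $X$ for all $t\ge0$ and all $u\in Z(0,t;U)$ says precisely that $\int_0^tT_{-1}(t-s)Bu(s)\,ds\in X$; since the substitution $s\mapsto t-s$ leaves the norm of $Z(0,t;U)$ invariant, this is equivalent to \eqref{eq:Admissibility}. Hence $\Sigma(A,B)$ is $Z$-admissible and, by the closed graph theorem, the constants $c(t)$ in \eqref{eq:adm2} are finite. This proves \ref{prop:ISSit2}, and reduces \ref{prop:ISSit11}$\,\Rightarrow\,$\ref{prop:ISSit12} to showing $c_\infty=\sup_{t>0}c(t)<\infty$.

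\emph{\ref{prop:ISSit11}$\,\Rightarrow\,$\ref{prop:ISSit12}.} Write $\Psi_tu:=\int_0^tT_{-1}(t-s)Bu(s)\,ds\in X$ for the mild solution at time $t$ with $x_0=0$; by the substitution above $\Psi_t$ is a bounded operator on $Z(0,t;U)$ of norm $c(t)$. Fix once and for all some $\xi\in X\setminus\{0\}$. For arbitrary $t\ge0$ and $u\in Z(0,t;U)$ we have $\Psi_tu=x(t)-T(t)\xi$, where $x$ is the mild solution with initial value $\xi$ and input $u$; bounding $\|x(t)\|$ by \eqref{eq:sISS-estimate} and $\|T(t)\xi\|$ by the same inequality with $u\equiv0$ gives
\begin{equation*}
  \|\Psi_tu\|\le\|x(t)\|+\|T(t)\xi\|\le 2\beta(\xi,t)+\mu\bigl(\|u\|_{Z(0,t;U)}\bigr)\le 2\beta(\xi,0)+\mu\bigl(\|u\|_{Z(0,t;U)}\bigr),
\end{equation*}
the last step because $\beta(\xi,\cdot)$ is nonincreasing. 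Taking the supremum over $u$ with $\|u\|_{Z(0,t;U)}\le1$ yields $c(t)=\|\Psi_t\|\le 2\beta(\xi,0)+\mu(1)$ for \emph{every} $t$, hence $c_\infty<\infty$.

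\emph{\ref{prop:ISSit12}$\,\Rightarrow\,$\ref{prop:ISSit11}.} Infinite-time $Z$-admissibility gives $\Psi_tu\in X$ with $\|\Psi_tu\|\le c_\infty\|u\|_{Z(0,t;U)}$, so every mild solution satisfies $x(t)=T(t)x_0+\Psi_tu\in X$ and $\|x(t)\|\le\|T(t)x_0\|+c_\infty\|u\|_{Z(0,t;U)}$. Choosing $\mu(r):=(c_\infty+1)r\in\mathcal K$ handles the input term; for the state term it suffices to produce, for each $\xi\neq0$, a function $\beta(\xi,\cdot)\in\mathcal L$ with $\beta(\xi,t)\ge\|T(t)\xi\|$ (and to put $\beta(0,t):=e^{-t}$). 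To that end set $g(s):=\sup_{r\ge\max(s,0)}\|T(r)\xi\|$, which is finite (bounded by $M\|\xi\|$), nonincreasing, tends to $0$ by strong stability, and satisfies $g(t)\ge\|T(t)\xi\|$; then
\begin{equation*}
  \beta(\xi,t):=\int_{t-1}^{t}g(s)\,ds+e^{-t}
\end{equation*}
is continuous, strictly decreasing and tends to $0$ --- hence $\beta(\xi,\cdot)\in\mathcal L$ --- while $\beta(\xi,t)\ge g(t)\ge\|T(t)\xi\|$ because $g$ is nonincreasing. This yields the sISS estimate and completes the proof.

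The two points I expect to require care are the choice of a \emph{single} nonzero $\xi$ in \ref{prop:ISSit11}$\,\Rightarrow\,$\ref{prop:ISSit12} --- which is exactly what converts the $t$-dependent bound $\beta(\xi,t)$ into one uniform in $t$, and hence $Z$-admissibility into \emph{infinite-time} $Z$-admissibility; this is the one place where the strong-stability half of \ref{prop:ISSit12} genuinely enters --- and the manufacture of an honest $\mathcal L$-comparison function from the merely monotone, a priori discontinuous majorant $t\mapsto\sup_{r\ge t}\|T(r)\xi\|$ of $\|T(\cdot)\xi\|$. Everything else is direct substitution into the defining inequalities.
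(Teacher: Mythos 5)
Your proof is correct, and in two places it takes a genuinely different (and more careful) route than the paper. For \ref{prop:ISSit11}$\Rightarrow$\ref{prop:ISSit12}, the paper exploits linearity by normalizing: it applies \eqref{eq:sISS-estimate} to $u/\|u\|_{Z(0,t;U)}$ with $x_0=0$ and reads off $c(t)\le\mu(1)$; this is shorter but tacitly uses $\beta(0,t)=0$, which the definition does not literally guarantee. Your decomposition $\Psi_tu=x(t)-T(t)\xi$ for a fixed $\xi\neq0$ sidesteps that issue at the cost of the slightly worse uniform bound $2\beta(\xi,0)+\mu(1)$ --- which is all that is needed. For \ref{prop:ISSit12}$\Rightarrow$\ref{prop:ISSit11}, the paper simply sets $\beta(x,t)=\|T(t)x\|$, which is continuous and tends to $0$ but need not be strictly decreasing, so it is not literally an $\mathcal L$-function; your mollified majorant $\int_{t-1}^t g(s)\,ds+e^{-t}$ of the nonincreasing envelope $g(s)=\sup_{r\ge\max(s,0)}\|T(r)\xi\|$ repairs this. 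The remaining steps (strong stability via $u\equiv0$, admissibility via the reflection $s\mapsto t-s$, the closed graph theorem) coincide with the paper's. In short: same skeleton, but your version closes two small gaps the paper glosses over, at the price of being longer.
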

\begin{proof} Clearly, $Z$-sISS and $Z$-siISS imply $Z$-ad\-missibi\-lity.\\
If $\Sigma(A,B)$ is $Z$-sISS or $Z$-siISS then, by setting $u=0$, it follows that for all $x \neq 0$ we have $\|T(t)x\| \leq \beta(x,t)$ for all $t$ and hence $\lim_{t \to \infty} T(t)x = 0$ which shows that $(T(t))_{t\ge 0}$  is strongly stable. This shows \ref{prop:ISSit2}. In the case that $\Sigma(A,B)$ is $Z$-sISS we get 
\begin{equation*}
  \begin{split}
    \left \Vert \int_0^{t} T_{-1}(s) Bu(s) \,ds \right \Vert &= \left \Vert \int_0^{t} T_{-1}(s) B\frac{u(s)}{\|u\|_{Z(0,t;U)}} \,ds \right \Vert \|u\|_{Z(0,t;U)}\\ &\leq  \mu(1)\|u\|_{Z(0,t;U)}. 
  \end{split}
\end{equation*}
for any element of $Z(0,t;U)$, $u \neq 0$. This shows that $\Sigma(A,B)$ is infinite-time $Z$-admissible, and thus (a)$\Rightarrow$(b) in \ref{prop:ISSit1}.
\\
Conversely, if the system $\Sigma(A,B)$ is $Z$-infinite-time-admissible and  $(T(t))_{t\ge 0}$  is strongly stable we set $\beta(x,t)= \|T(t)x\|$, $\sigma(s):= Ms$, where $M:= \sup_{t\geq 0}\|T(t)\|$, and $\mu(s) = c_\infty s$. Then $\sigma$ belongs to $\mathcal K_\infty$, $\beta(x,t)\leq M \|x\|$  and $\|x(t)\|\leq \beta(x_0,t) + \mu( \|u\|_{Z(0,t;U)})$ for all $t\geq0$, $x \in X$.
\end{proof}

We remark that Proposition \ref{proprel1}  can be proved for more general function spaces $Z$ with properties as discussed in  \cite{JNPS16}. 
The (short) proof of the following proposition follows the same lines as in \cite[Prop.~2.10]{JNPS16}.
\begin{proposition}\label{proprel2a}
  Let $p\in[1,\infty)$. If the system $\Sigma(A,B)$ is $L^p$-sISS then it is $L^p$-siISS.
\end{proposition}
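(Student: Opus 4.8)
The plan is to reduce the statement to the characterization already proved in Proposition~\ref{proprel1}\ref{prop:ISSit1}: $L^p$-sISS is equivalent to infinite-time $L^p$-admissibility together with strong stability of $(T(t))_{t\ge0}$. Hence I may assume that $c_\infty=\sup_{t>0}c(t)<\infty$, that $M:=\sup_{t\ge0}\|T(t)\|<\infty$, and that the mild solution satisfies
\[
\|x(t)\|\ \le\ \|T(t)x_0\|+c_\infty\|u\|_{L^p(0,t;U)},\qquad t\ge0,\ x_0\in X,\ u\in L^p(0,t;U).
\]
From here the argument splits into two independent parts: rewriting the $L^p$-term as an integral term of the form appearing in \eqref{eq:siISS-estimate}, and replacing $t\mapsto\|T(t)x_0\|$ by a bona fide $\mathcal L$-comparison function.

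For the first part I would exploit that $p<\infty$, so that $\|u\|_{L^p(0,t;U)}^{\,p}=\int_0^t\|u(s)\|_U^{\,p}\,ds$. Setting $\mu(s):=s^p$ gives a function in $\mathcal K$, and setting $\theta(r):=(c_\infty+1)r^{1/p}$ gives a function in $\mathcal K_\infty$; then
\[
c_\infty\|u\|_{L^p(0,t;U)}=c_\infty\Bigl(\int_0^t\mu(\|u(s)\|_U)\,ds\Bigr)^{1/p}\le\theta\Bigl(\int_0^t\mu(\|u(s)\|_U)\,ds\Bigr),
\]
which is precisely the shape of the input term in the siISS estimate.

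For the second part I would set $\beta(x,t):=\sup_{s\ge t}\|T(s)x\|+e^{-t}\|x\|$. The supremum is non-increasing in $t$, converges to $0$ as $t\to\infty$ by strong stability, and is continuous: left-continuity follows directly from continuity of $s\mapsto\|T(s)x\|$, and right-continuity from the inequality $\|T(t)x\|=\lim_{s\downarrow t}\|T(s)x\|\le\sup_{s>t}\|T(s)x\|$. Adding $e^{-t}\|x\|$ then makes $\beta(x,\cdot)$ continuous, strictly decreasing, and convergent to $0$ whenever $x\ne0$, so $\beta(x,\cdot)\in\mathcal L$, while still $\|T(t)x\|\le\beta(x,t)$ for all $t\ge0$. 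Combining the two parts yields, for all $t\ge0$, $x_0\in X$ and $u\in L^p(0,t;U)$,
\[
\|x(t)\|\ \le\ \beta(x_0,t)+\theta\Bigl(\int_0^t\mu(\|u(s)\|_U)\,ds\Bigr),
\]
which is exactly \eqref{eq:siISS-estimate}; hence $\Sigma(A,B)$ is $L^p$-siISS.

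I do not expect a genuine obstacle here — once Proposition~\ref{proprel1} is available this is essentially bookkeeping, which is presumably why the authors call the proof ``short''. The only mildly delicate point is verifying that $\beta$ really lies in $\mathcal L$: the naive choice $\beta(x,t)=\|T(t)x\|$ fails because $t\mapsto\|T(t)x\|$ need not be monotone, and the $\sup_{s\ge t}$-truncation (to restore monotonicity, and with a short argument continuity) together with the auxiliary term $e^{-t}\|x\|$ (to restore strict monotonicity) are exactly what repairs this.
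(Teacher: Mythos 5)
Your proof is correct and takes essentially the route the paper intends (the paper itself only points to \cite[Prop.~2.10]{JNPS16}): write $\|u\|_{L^p(0,t;U)}^p=\int_0^t\|u(s)\|_U^p\,ds$, take $\mu(s)=s^p\in\mathcal K$, and absorb the constant and the $p$-th root into $\theta\in\mathcal K_\infty$. Your additional care in replacing $t\mapsto\|T(t)x_0\|$ by a genuine $\mathcal L$-function via $\sup_{s\ge t}\|T(s)x_0\|+e^{-t}\|x_0\|$ is, if anything, more scrupulous than the paper's own proof of Proposition~\ref{proprel1}, which simply sets $\beta(x,t)=\|T(t)x\|$.
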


\section{Main results}
\label{sec:3}
In this section we study the relation between strong integral ISS with respect to $L^\infty$ and (infinite-time) admissibility with respect to some Orlicz. We need two technical lemmata.

\begin{lemma}\label{lem:Phi1}
Let $\Phi$ be a Young function. Then there exists some Young function $\Phi_1$ such that $\Phi \leq \Phi_1$ and $\sup_{x>0}\Phi(cx)/ \Phi_{1}(x) <\infty$  for all $c >0$.
\end{lemma}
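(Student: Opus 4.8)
The plan is to obtain $\Phi_1$ as the Young function generated by the density $\varphi_1:=\varphi\circ\psi$, where $\psi\colon[0,\infty)\to[0,\infty)$ is a fixed continuous nondecreasing function with $\psi(0)=0$, $\psi(s)\ge s$ for all $s\ge 0$, and $\psi(s)/s\to\infty$ both as $s\to 0^+$ and as $s\to\infty$; concretely one may take $\psi(s)=\sqrt{s}+s^2$. The idea is that composing $\varphi$ with a function that dilates its argument — much more at the two ends than in the middle — makes $\Phi_1$ outgrow every dilate of $\Phi$ while keeping the density monotone for free.

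First I would verify that $\varphi_1$ is an admissible generating function: it is nondecreasing (composition of nondecreasing maps), right-continuous (composition of the right-continuous $\varphi$ with the continuous nondecreasing $\psi$), satisfies $\varphi_1(0)=\varphi(\psi(0))=\varphi(0)=0$ and $\varphi_1(s)=\varphi(\psi(s))>0$ for $s>0$, and obeys $\varphi_1(s)\to\infty$ as $s\to\infty$ because $\psi(s)\to\infty$. Hence $\Phi_1(x):=\int_0^x\varphi_1(s)\,ds$ is a Young function. Since $\psi(s)\ge s$ forces $\varphi_1(s)=\varphi(\psi(s))\ge\varphi(s)$ pointwise, integrating yields $\Phi\le\Phi_1$.

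For the dilation estimate, fix $c>0$. The substitution $s=ct$ gives $\Phi(cx)=c\int_0^x\varphi(ct)\,dt$, which I would compare termwise with $\Phi_1(x)=\int_0^x\varphi(\psi(t))\,dt$. If $c\le 1$, then $ct\le t\le\psi(t)$ for all $t$, so $\Phi(cx)\le c\,\Phi_1(x)$ and we are done. If $c>1$, the inequality $ct\le\psi(t)$ still holds for $t\le 1/c^2$ (there $ct\le\sqrt t\le\psi(t)$) and for $t\ge c$ (there $ct\le t^2\le\psi(t)$), so it only fails on the bounded interval $(1/c^2,c)$. Consequently, for $x\le 1/c^2$ one gets $\Phi(cx)\le c\,\Phi_1(x)$ directly; for $x\ge c$ one has $\Phi_1(x)\ge\int_c^x\varphi(\psi(t))\,dt\ge\int_c^x\varphi(ct)\,dt=\frac1c\bigl(\Phi(cx)-\Phi(c^2)\bigr)$, hence $\Phi(cx)\le c\,\Phi_1(x)+\Phi(c^2)$; and for $1/c^2<x<c$ the crude bound $\Phi(cx)\le\Phi(c^2)$ gives the same inequality. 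Since $\Phi_1$ is nondecreasing and positive on $(0,\infty)$, we conclude $\sup_{x>0}\Phi(cx)/\Phi_1(x)\le c+\Phi(c^2)/\Phi_1(1/c^2)<\infty$.

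The step requiring the most care — and the one dictating the shape of $\psi$ — is reconciling two opposing constraints: to absorb all dilates $\Phi(c\,\cdot)$ the function $\Phi_1$ must grow much faster than $\Phi$ near $0$ \emph{and} near $\infty$, yet it must stay a Young function, in particular have a nondecreasing density and satisfy $\Phi_1(x)=o(x)$ as $x\to 0$. A naive rescaling such as $\Phi_1(x)=\Phi(\sqrt x)$ near $0$ fails because its density $\varphi(\sqrt x)/(2\sqrt x)$ need not be monotone and may even be non-integrable; taking a maximum of $\Phi$ with a power of $x$ fails for Young functions that are simultaneously "spiky" and super-quadratic near the origin. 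Passing instead through the density and composing $\varphi$ with $\psi$ circumvents both issues at once, since monotonicity of $\varphi\circ\psi$ is automatic and the dilation bound reduces, as above, to the pointwise comparison $ct\le\psi(t)$ away from a compact set.
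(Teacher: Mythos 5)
Your proof is correct and follows essentially the same route as the paper's: both constructions accelerate $\Phi$ by a square-root dilation of the argument near the origin and a squaring dilation near infinity, and then bound $\Phi(cx)/\Phi_1(x)$ separately on a neighbourhood of $0$ (where $ct\le\sqrt t$), on $[c,\infty)$ (where $ct\le t^2$), and on the remaining compact interval. Your single composed density $\varphi\circ\psi$ with $\psi(s)=\sqrt s+s^2$ is a slightly tidier packaging than the paper's gluing of $\Lambda(x)=\int_0^x\varphi(\sqrt t)\,dt$ with a multiple of $\Phi(x^2)$, since monotonicity of the density is then automatic; only the side remark in your last paragraph is off --- the density $\varphi(\sqrt x)/(2\sqrt x)$ of $x\mapsto\Phi(\sqrt x)$ is in fact always integrable near $0$ (substituting $u=\sqrt x$ gives $\Phi(\sqrt{\varepsilon})$), so the genuine obstruction there is only the possible failure of monotonicity.
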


\begin{proof}
  Let the Young function $\Phi\colon [0,\infty) \to \mathbb R$ be generated $\varphi$, i.e.\ $\Phi(x) = \int_0^x \varphi(t) \, dt$. We define two Young functions $\Lambda, \Psi \colon [0,\infty) \to \mathbb R$ by
  \begin{equation*}
    \Lambda (x) = \int_0^x \varphi(\sqrt{t}) \,dt
  \end{equation*}
and $\Psi(x) = \Phi(x^2)$. Then, obviously, $\Phi \leq \Lambda$ holds on the interval $[0,1]$ and  $\Phi \leq \Psi$ holds on $[1,\infty)$. Hence $\Phi_1 \colon [0,\infty) \to \mathbb R$,
\begin{equation*}
  \Phi_1(x) = \begin{cases}
    \Lambda(x), &x< 1, \\
    \frac{\Lambda(1)}{\Psi(1)} \Psi(x), &x\geq 1,
  \end{cases}
\end{equation*}
defines a Young function with $\Phi \leq \Phi_1$ (Note that $\Lambda(1) \geq \Psi(1) = \Phi(1)$).\\
We now show that for each $c >0$ the function $x \mapsto \Phi(cx)/\Phi_1(x)$ is bounded on $(0,\infty)$. For $0 < c \leq 1$ this  simply follows from the monotonicity of $\Phi$. Indeed we have
\begin{equation*}
  \frac{\Phi(cx)}{\Phi_1(x)} \leq \frac{\Phi(x)}{\Phi_1(x)} \leq 1 
\end{equation*}
for all $x >0$.\\
Now let $c >1$. For $x \geq c$ we have that
\begin{equation*}
  \frac{\Phi(cx)}{\Phi_1(x)} = \frac{\Psi(1)\Phi(cx)}{\Lambda(1)\Phi(x^{2})} \leq \frac{\Psi(1)}{\Lambda(1)}.
\end{equation*}
 For an arbitrary Young function $\Omega$, generated by $\omega$, we have for all $y >0$ 
\begin{equation*}
  \frac{\Omega(y)}{y} = \frac{1}{y} \int_0^y \omega(t) \,dt \geq  \frac{1}{y} \int_{y/2}^y \omega(t) \,dt \geq \frac{1}{2} \omega\left(\frac{y}{2} \right)
\end{equation*}
and 
\begin{equation*}
  \frac{\Omega(y)}{y} = \frac{1}{y} \int_0^y \omega(t) \,dt \leq \omega(y).
\end{equation*}
Therefore we have
\begin{equation*}
  \frac{\Phi(cx)}{\Lambda(x)} = c  \frac{\Phi(cx)}{cx} \frac{x}{\Lambda(x)} \leq 2c \frac{\varphi(cx)}{\varphi\left(\sqrt{\frac{x}{2}} \right)} \leq 2c,
\end{equation*}
where the last inequality holds for all $x\in (0, 1/(2c^2)]$.
Since the continuous function $x \mapsto \Phi(cx)/\Phi_1(x)$ is bounded on the compact interval $[1/(2c^2), c]$, the claim follows.
\end{proof}

\begin{lemma}\label{lem:MoTr}[Lemma 8.1 in \cite{MoTr50}]
  Let $I \subset \mathbb R$ be an interval. Let $(u_n)_{n \in N}$ be sequence in  $(u_n)_{n \in N} \subset L_\Phi(I)$ such that for all $r >0$ the sequence $(r  u_n)_{n\in \mathbb N}$ is mean convergent to zero, i.e. $\lim_{n \to \infty} \int_I \Phi(r u_n(x)) \, dx = 0$. Then we have $\lim_{n \to \infty}\|u_n\|_{\Phi} =0$.
\end{lemma}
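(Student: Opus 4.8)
The plan is to unwind the definition of the Luxemburg norm and let the hypothesis, applied at a single well-chosen dilation, do all the work; no Orlicz-space machinery beyond the definition is needed. Recall that, directly from
\[
\|u\|_{\Phi}=\inf\Bigl\{k>0 : \int_I \Phi\bigl(k^{-1}\lvert u(x)\rvert\bigr)\,dx\le 1\Bigr\},
\]
one has for every $k>0$ the implication: if $\int_I \Phi(k^{-1}\lvert u(x)\rvert)\,dx\le 1$, then $\|u\|_{\Phi}\le k$. So it suffices to prove that for each $\varepsilon>0$ there is an $N\in\mathbb{N}$ such that $\int_I \Phi\bigl(\varepsilon^{-1}\lvert u_n(x)\rvert\bigr)\,dx\le 1$ for all $n\ge N$.

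First I would fix $\varepsilon>0$ and invoke the hypothesis with the specific value $r=\varepsilon^{-1}$: the sequence $(\varepsilon^{-1}u_n)_{n\in\mathbb{N}}$ being mean convergent to zero means exactly
\[
\lim_{n\to\infty}\int_I \Phi\bigl(\varepsilon^{-1}\lvert u_n(x)\rvert\bigr)\,dx = 0,
\]
so in particular this integral is $\le 1$ for all $n$ beyond some $N$. By the implication recalled above, $\|u_n\|_{\Phi}\le\varepsilon$ for all $n\ge N$. Since $\varepsilon>0$ was arbitrary, $\lim_{n\to\infty}\|u_n\|_{\Phi}=0$, which is the claim.

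A couple of remarks on the (minor) points of care. One reads $\Phi(r\,u_n(x))$ as $\Phi(r\lvert u_n(x)\rvert)$, using that $\Phi$ is defined and nondecreasing on $[0,\infty)$; and one should note that finiteness of the integrals for large $n$ is automatic from the mean-convergence hypothesis, so membership $u_n\in L_\Phi(I)$ is only used to make $\|u_n\|_{\Phi}$ meaningful for the small indices. The full strength ``for all $r>0$'' is genuinely needed only because the target bound $\varepsilon$ is arbitrary, so $r=\varepsilon^{-1}$ sweeps over all of $(0,\infty)$ --- in particular over arbitrarily large $r$, which is precisely the regime where $\Phi(r\lvert u_n\rvert)$ is largest. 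There is thus no real obstacle: this is just the easy direction of the modular-versus-norm correspondence (``modular convergence at every dilation $\Rightarrow$ Luxemburg-norm convergence''), which, in contrast to its converse, needs no growth condition such as $\Delta_2$ on $\Phi$.
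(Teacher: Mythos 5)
Your proof is correct. The paper itself gives no proof of this lemma --- it is quoted as Lemma 8.1 of Morse and Transue \cite{MoTr50} --- so there is no in-paper argument to compare against; your one-step deduction (apply the hypothesis at $r=\varepsilon^{-1}$, note that a modular value $\le 1$ at level $k=\varepsilon$ puts $\varepsilon$ in the set defining the Luxemburg infimum, hence $\|u_n\|_\Phi\le\varepsilon$) is exactly the standard argument for this direction and needs nothing beyond the definition of $\|\cdot\|_\Phi$.
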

Now we are ready to prove a sufficient condition for a system $\Sigma(A,B)$ to be siISS. The proof is a careful refinement of the technique used in the proof of \cite[Thm.~3.1]{JNPS16} -- the situation there being easier as Lemma \ref{lem:MoTr} is not needed.
\begin{theorem}\label{thm:main2b}
Suppose there is a Young function $\Phi$ such that the system $\Sigma(A,B)$ is $E_{\Phi}$-sISS. Then the system $\Sigma(A,B)$ is $L^{\infty}$-siISS.
\end{theorem}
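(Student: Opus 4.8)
The plan is to reduce everything to a statement about Luxemburg norms of the inputs and then to exploit Lemmata \ref{lem:Phi1} and \ref{lem:MoTr}. By Proposition \ref{proprel1}\ref{prop:ISSit1}, the hypothesis is equivalent to strong stability of $(T(t))_{t\ge0}$ together with infinite-time $E_\Phi$-admissibility, with a uniform constant $c_\infty<\infty$ as in \eqref{eq:adm2}; moreover, putting $u=0$ in the $E_\Phi$-sISS estimate produces a map $\beta$ with $\beta(x,\cdot)\in\mathcal L$ for $x\neq0$ and $\|T(t)x\|\le\beta(x,t)$. Using the reflection $s\mapsto t-s$ (measure preserving, hence norm preserving on $E_\Phi(0,t;U)$) together with \eqref{eq:adm2}, one then gets, for every $t\ge0$, $x_0\in X$ and $u\in L^\infty(0,t;U)\subseteq E_\Phi(0,t;U)$,
\[
\|x(t)\|\le\|T(t)x_0\|+\Bigl\|\int_0^tT_{-1}(t-s)Bu(s)\,ds\Bigr\|\le\beta(x_0,t)+c_\infty\|u\|_{E_\Phi(0,t;U)}.
\]
Thus the whole problem reduces to dominating $\|u\|_{E_\Phi(0,t;U)}$, \emph{uniformly in $t$}, by an expression $\theta\bigl(\int_0^t\mu(\|u(s)\|_U)\,ds\bigr)$ with $\theta\in\mathcal K_\infty$ and $\mu\in\mathcal K$.

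The choice will be $\mu=\Phi_1$, the Young function from Lemma \ref{lem:Phi1}, for which $K(r):=\sup_{x>0}\Phi(rx)/\Phi_1(x)<\infty$ for every $r>0$, so that $\Phi(rv)\le K(r)\Phi_1(v)$ pointwise. I would study the (clearly nondecreasing) function
\[
g(\tau):=\sup\Bigl\{\|v\|_{L_\Phi(I)}\ \Big|\ I\subseteq\mathbb R\text{ an interval},\ v\ge0\text{ measurable},\ \textstyle\int_I\Phi_1(v)\le\tau\Bigr\},\qquad\tau\ge0.
\]
Writing $v=\|u(\cdot)\|_U$, one has $\|u\|_{E_\Phi(0,t;U)}=\|v\|_{L_\Phi(0,t)}\le g\bigl(\int_0^t\Phi_1(\|u(s)\|_U)\,ds\bigr)$, so it suffices to prove that $g$ is finite and that $g(\tau)\to0$ as $\tau\to0^+$. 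Granting this, $c_\infty g$ is nondecreasing, finite and vanishes at $0$, hence is dominated by some $\theta\in\mathcal K_\infty$ (for instance $\theta(\tau)=\tau+\int_1^2 c_\infty g(\tau s)\,ds$), and since $\Phi_1\in\mathcal K$ the displayed inequality turns into the $L^\infty$-siISS estimate. Finiteness of $g$ is routine: from $\Phi(rv)\le K(r)\Phi_1(v)$ and $K(r)\le r$ for $r\le1$ (because $\Phi\le\Phi_1$ and $\Phi$ is convex with $\Phi(0)=0$), pick $r>0$ small with $K(r)\tau\le1$; then $\int_I\Phi(rv)\le1$, so $\|v\|_{L_\Phi(I)}\le 1/r$.

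The hard part is $g(\tau)\to0$. The naive estimate only gives $g(\tau)\le\max(1,\tau)$, and its additive constant $1$ cannot be absorbed into a $\mathcal K_\infty$-function of the integral — which is precisely why one passes to the faster-growing $\Phi_1$ and invokes Lemma \ref{lem:MoTr}. I would argue by contradiction: if $g(\tau)\not\to0$, there are $\delta>0$, $\tau_n\downarrow0$, intervals $I_n$ and functions $v_n\ge0$ with $\int_{I_n}\Phi_1(v_n)\le\tau_n$ but $\|v_n\|_{L_\Phi(I_n)}\ge\delta$. Extending each $v_n$ by zero to $\mathbb R$ (changing neither the modular nor the Luxemburg norm), for every fixed $r>0$ we obtain $\int_{\mathbb R}\Phi(rv_n)\le K(r)\int_{\mathbb R}\Phi_1(v_n)\le K(r)\tau_n\to0$, i.e.\ $(rv_n)_n$ is mean convergent to zero for each $r>0$; Lemma \ref{lem:MoTr} then forces $\|v_n\|_{L_\Phi(\mathbb R)}\to0$, contradicting $\|v_n\|_{L_\Phi(\mathbb R)}=\|v_n\|_{L_\Phi(I_n)}\ge\delta$. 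This contradiction step is the only genuinely delicate point; the remainder is bookkeeping with comparison functions and the norm/modular inequalities for Orlicz spaces.
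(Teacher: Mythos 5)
Your proposal is correct and follows essentially the same route as the paper: both hinge on producing $\Phi_1$ via Lemma \ref{lem:Phi1} so that every dilation $r\|u_n(\cdot)\|_U$ becomes $\Phi$-mean convergent, and then invoking Lemma \ref{lem:MoTr} in a contradiction argument to get continuity at $0$ of the gain function, with $\mu=\Phi_1$. The only difference is organizational: you isolate the purely Orlicz-theoretic statement $g(\tau)\to0$ and multiply by $c_\infty$ afterwards (and build $\theta\in\mathcal K_\infty$ explicitly), whereas the paper bakes the admissibility bound into the definition of $\theta$ and cites \cite{CLS98} for the $\mathcal K_\infty$ majorant.
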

\begin{proof}
  Let $\Phi_1$ be the Young function given by Lemma \ref{lem:Phi1}. We define $\theta \colon [0,\infty) \to [0,\infty)$ by $\theta(0) = 0$ and 
\begin{multline*}
  \theta (\alpha) = \sup\left\{ \left\Vert \int_0^t T_{-1}(s) B u(s) \,ds  \right\Vert \Bigm| \right. u \in L^\infty (0,t;U),\,t \geq 0,\\ \left.\int_0^t \Phi_{1}(\|u(s)\|_U) \,ds \leq \alpha \right\},
\end{multline*}
for $\alpha >0$. The function $\theta$ is well-defined, since by infinite-time admissibility, \cite[ Remark 39]{JNPS16} and  $\Phi \leq \Phi_1$ we have for $t\geq 0$, $u \in L^\infty(0,t;U)$ 
\begin{equation*}
  \begin{split}
    \left\| \int_0^t T_{-1}(s) B u(s) \,ds \right \|&\leq c_\infty \|u\|_{E_\Phi (0,t;U)} \\ &\leq c_\infty \left(1+ \int_0^t \Phi (\|u(s)\|_U) \,ds \right)\\ &\leq c_\infty \left(1+ \int_0^t \Phi_{1}(\|u(s)\|_U) \,ds \right)\\ &\leq c_\infty(1+\alpha).
  \end{split}
\end{equation*}

Clearly, $\theta$ is non-decreasing.
If we can show that $\lim_{t \searrow 0}\theta(t)=0$, then, by  \cite[Lemma 2.5]{CLS98},  there exists  $\tilde{\theta}\in \mathcal{K}_\infty$ with $\theta\leq\tilde{\theta}$. Since $\Phi_{1} \colon [0, \infty) \to [0, \infty)$ is a Young function, $\Phi_{1} \in \mathcal K_{\infty}$. The definition of $\theta$ yields that
 \[\left\Vert \int_0^t T_{-1}(s) B u(s) \,ds  \right\Vert \leq \theta\left(\int_0^t \Phi_{1}(\|u(s)\|_U) \,ds \right) \leq\tilde{\theta}\left(\int_0^t \Phi_{1}(\|u(s)\|_U) \,ds \right)\]
 for all  $u \in L^\infty(0,t;U)$ which means that $\Sigma(A,B)$ is siISS.

To show $\lim_{t \searrow 0}\theta(t)=0$, let $(\alpha_{n})_{n \in \mathbb N}$ be a sequence of positive real numbers converging to $0$. By the definition of $\theta$, for any $n\in\mathbb N$ there exist a $u_{n}\in L^{\infty}(0,\infty;U)$ with compact essential support such that
\begin{equation*}
  \int_0^{\infty} \Phi_1(\|u_n(s)\|_U) \,ds < \alpha_n
\end{equation*}
and 
\begin{equation}\label{thm31:eq5b}
  \left|\theta(\alpha_{n})- \left \Vert \int_{0}^{\infty}T_{-1}(s)Bu_{n}(s) \,ds\right \Vert \right|<\frac{1}{n}.
\end{equation}

It follows that the sequence $(\|u_{n}(\cdot)\|_U)_{n \in \mathbb N}$ is $\Phi_{1}$-mean convergent to zero.
Hence, for all $r >0$ the sequence $(r \|u_{n}(\cdot)\|_U)_{n \in \mathbb N}$ is $\Phi$-mean convergent to zero. By Lemma \ref{lem:MoTr} the sequence converges to zero with respect to the norm of the space $L_\Phi(0,\infty)$ and hence $\lim_{n \to \infty} \|{u}_{n}\|_{L_{\Phi}(0,\infty;U)}=0$. Therefore, by admissibility,
\[  \left \Vert \int_0^{\infty} T_{-1}(s) B  u_{n}(s) \, ds \right \Vert \leq c_\infty \| u_n\|_{L_\Phi(0,\infty;U)} \to 0, \] 
as $n\to \infty$. Together with \ref{thm31:eq5b} we obtain that$\lim_{n \to \infty} \theta(\alpha_n) =0$.
\end{proof}
We omit the proof of the following Lemma as it is implicitly given in the proof of \cite[Lem.~8]{JNPS16}. Note that here both assumption and conclusion are weaker.

\begin{lemma}\label{lem:3.24b}
  Let $(T(t))_{t\ge 0}$ be a semigroup and let $\Sigma(A,B)$ be siISS. Then there exist $\tilde\theta, \Phi \in \mathcal K_\infty$ such that $\Phi$ is a Young function which is continuously differentiable on $(0,\infty)$ and for all $u \in L^\infty (0,1;U)$,
  \begin{equation} 
       \left \Vert \int_0^1 T_{-1} (s) B u(s) \, ds \right \Vert \leq \tilde\theta \left(\int_0^1 \Phi (\Vert u(s) \Vert_U )\, ds \right).
  \end{equation}
\end{lemma}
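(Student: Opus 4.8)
The plan is to follow the construction used in the proof of Theorem~\ref{thm:main2b}: I would produce a non-decreasing function $g\colon[0,\infty)\to[0,\infty]$ with $g(0)=0$, finite on all of $[0,\infty)$, and with $\lim_{\alpha\searrow0}g(\alpha)=0$, and then quote \cite[Lemma~2.5]{CLS98} to obtain some $\tilde\theta\in\mathcal K_\infty$ with $g\le\tilde\theta$. Writing $\Psi(u):=\int_0^1 T_{-1}(s)Bu(s)\,ds$, which lies in $X$ because $L^\infty$-siISS implies $L^\infty$-admissibility (Proposition~\ref{proprel1}), I would set
\[
  g(\alpha):=\sup\Bigl\{\|\Psi(u)\|\ \Big|\ u\in L^\infty(0,1;U),\ \int_0^1\Phi\bigl(\|u(s)\|_U\bigr)\,ds\le\alpha\Bigr\}
\]
for a Young function $\Phi$ to be chosen; then $\|\Psi(u)\|\le g\bigl(\int_0^1\Phi(\|u(s)\|_U)\,ds\bigr)\le\tilde\theta\bigl(\int_0^1\Phi(\|u(s)\|_U)\,ds\bigr)$ is exactly the assertion.

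The only available input is the siISS estimate, which provides $\theta_0\in\mathcal K_\infty$, $\mu\in\mathcal K$ and $\beta$ with $\|x(t)\|\le\beta(x_0,t)+\theta_0(\int_0^t\mu(\|u(s)\|_U)\,ds)$. Taking $x_0=0$ together with the substitution $s\mapsto1-s$ (which identifies $\Psi(u)$ with the mild solution at time $1$, with $x_0=0$, for the input $s\mapsto u(1-s)$) and the linearity of $\Psi$, this gives, for every $\lambda>0$,
\[
  \|\Psi(u)\|=\tfrac1\lambda\|\Psi(\lambda u)\|\le\tfrac1\lambda\Bigl(\beta(0,1)+\theta_0\Bigl(\int_0^1\mu\bigl(\lambda\|u(s)\|_U\bigr)\,ds\Bigr)\Bigr).
\]
For $\Phi$ I would use a two-step construction: first pick any Young function $\Xi$ with $\mu\le\Xi$ on $[1,\infty)$ (possible since $\mu$ is continuous), then let $\Phi$ be the Young function associated with $\Xi$ by Lemma~\ref{lem:Phi1}, so $\Xi\le\Phi$ and $\kappa_c:=\sup_{x>0}\Xi(cx)/\Phi(x)<\infty$ for all $c>0$; finally, since enlarging $\Phi$ to a larger (continuously differentiable) Young function only increases $\int_0^1\Phi(\|u(s)\|_U)\,ds$ and hence leaves the desired estimate intact, I may assume $\Phi\in\mathcal K_\infty$ is continuously differentiable on $(0,\infty)$. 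Finiteness of $g$ is then immediate: splitting $[0,1]$ into $\{\|u(s)\|_U\le1\}$ and $\{\|u(s)\|_U>1\}$ and using $\mu\le\Xi\le\Phi$ on $[1,\infty)$ yields $\int_0^1\mu(\|u(s)\|_U)\,ds\le\mu(1)+\alpha$ whenever $\int_0^1\Phi(\|u(s)\|_U)\,ds\le\alpha$, so the case $\lambda=1$ of the above gives $g(\alpha)\le\beta(0,1)+\theta_0(\mu(1)+\alpha)<\infty$.

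The step I expect to be the main obstacle is $\lim_{\alpha\searrow0}g(\alpha)=0$, because the siISS estimate carries the additive constant $\beta(0,1)$, which the definition does not force to be zero: the direct estimate (the case $\lambda=1$) only gives $\limsup_n\|\Psi(u_n)\|\le\beta(0,1)$ for a sequence $u_n$ nearly realizing $g(1/n)$. The escape route is the free parameter $\lambda$. Taking $u_n\in L^\infty(0,1;U)$ with $\int_0^1\Phi(\|u_n(s)\|_U)\,ds\to0$ and $\|\Psi(u_n)\|\ge g(1/n)-1/n$, Chebyshev's inequality gives $\|u_n(\cdot)\|_U\to0$ in measure on $[0,1]$. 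Then, for each \emph{fixed} $\lambda>0$, $\int_0^1\mu(\lambda\|u_n(s)\|_U)\,ds\to0$: on $\{\|u_n(s)\|_U\le1/\lambda\}$ the integrand is bounded by $\mu(1)$ and tends to $0$ in measure (bounded convergence on a finite measure space), while on $\{\|u_n(s)\|_U>1/\lambda\}$ one has $\lambda\|u_n(s)\|_U>1$, hence $\mu(\lambda\|u_n(s)\|_U)\le\Xi(\lambda\|u_n(s)\|_U)\le\kappa_\lambda\Phi(\|u_n(s)\|_U)$, and integrating this over that set gives at most $\kappa_\lambda\int_0^1\Phi(\|u_n(s)\|_U)\,ds\to0$. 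Feeding this into the displayed inequality yields $\limsup_n\|\Psi(u_n)\|\le\beta(0,1)/\lambda$ for \emph{every} $\lambda>0$, hence $\|\Psi(u_n)\|\to0$ and $g(1/n)\to0$; monotonicity of $g$ then gives $\lim_{\alpha\searrow0}g(\alpha)=0$. I would also emphasise that the two-step construction of $\Phi$ is genuinely needed: a single Young function dominating $\mu$ need not satisfy $\mu(\lambda x)\le\kappa_\lambda\Phi(x)$ for all $x$, and Lemma~\ref{lem:Phi1} is precisely the tool that produces such a relation.
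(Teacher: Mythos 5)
Your proof is correct. A direct comparison is not really possible here because the paper omits its own proof of Lemma~\ref{lem:3.24b}, deferring to the argument implicit in \cite[Lem.~8]{JNPS16}; what you have written is the natural adaptation of the sup-function template from Theorem~\ref{thm:main2b} (define a nondecreasing majorant as a supremum, check finiteness, check continuity at $0$, invoke \cite[Lemma~2.5]{CLS98}), with two additional ingredients that are genuinely needed in this setting and that you identify correctly. First, the construction of $\Phi$ from $\mu$ via an intermediate Young function $\Xi$ with $\mu\le\Xi$ on $[1,\infty)$ followed by Lemma~\ref{lem:Phi1}: you are right that a single Young function dominating $\mu$ would not give the bound $\mu(\lambda x)\le\kappa_\lambda\Phi(x)$ needed on the set $\{\|u_n(s)\|_U>1/\lambda\}$, so the two-step construction is not cosmetic. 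Second, the scaling $\Psi(u)=\lambda^{-1}\Psi(\lambda u)$ to neutralize the additive constant $\beta(0,1)$, which the paper's definition of siISS indeed does not force to vanish; taking $n\to\infty$ first for fixed $\lambda$ and then $\lambda\to\infty$ is the correct order of limits, and your verification that $\int_0^1\mu(\lambda\|u_n(s)\|_U)\,ds\to0$ (bounded convergence in measure on the small set, the $\kappa_\lambda$-bound on the large set) replaces the appeal to Lemma~\ref{lem:MoTr} used in Theorem~\ref{thm:main2b} by an elementary argument. Two assertions are left unjustified and deserve a line each, though both are standard: the existence of a Young function $\Xi$ with $\mu\le\Xi$ on $[1,\infty)$ (e.g.\ generated by $\xi(s)=4s(\mu(2s)+1)$, since then $\Xi(x)\ge\tfrac{x}{2}\xi(x/2)\ge\mu(x)$ for $x\ge1$), and the existence of a continuously differentiable Young majorant of $\Phi_1$ (e.g.\ generated by $s\mapsto\int_1^2\varphi_1(st)\,dt$); with those supplied, the argument is complete.
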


\begin{theorem}\label{thm:main2c}
  Assume that the system $\Sigma(A,B)$ is siISS. Then there is a Young function $\Phi$ such that the system $\Sigma(A,B)$ is $E_\Phi$-admissible. If, additionally, function $\mu$ in \eqref{eq:siISS-estimate} can be chosen as a Young function, then 
 $E_{\mu}$-sISS.
\end{theorem}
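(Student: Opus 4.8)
The plan is to treat the two assertions separately. In each case I first derive an a priori bound for the map $u\mapsto\int_0^t T_{-1}(s)Bu(s)\,ds$ on a bounded time interval, then pass to the Luxemburg norm using the elementary fact that $\|u\|_{L_\Phi}\le1$ forces $\int\Phi(\|u\|_U)\le1$ (true because $\Phi$ is a Young function), and finally upgrade to all of $E_\Phi$ by homogeneity and density.

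For the first assertion, apply Lemma \ref{lem:3.24b} to obtain $\tilde\theta\in\mathcal K_\infty$ and a Young function $\Phi$ with $\|\int_0^1 T_{-1}(s)Bu(s)\,ds\|\le\tilde\theta(\int_0^1\Phi(\|u(s)\|_U)\,ds)$ for all $u\in L^\infty(0,1;U)$. If $\|u\|_{E_\Phi(0,1;U)}\le1$ then $\int_0^1\Phi(\|u(s)\|_U)\,ds\le1$, so $\|\int_0^1 T_{-1}(s)Bu(s)\,ds\|\le\tilde\theta(1)$; replacing $u$ by $u/\|u\|_{E_\Phi(0,1;U)}$ and using linearity gives $\|\int_0^1 T_{-1}(s)Bu(s)\,ds\|\le\tilde\theta(1)\,\|u\|_{E_\Phi(0,1;U)}$ for every $u\in L^\infty(0,1;U)$. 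Since $L^\infty(0,1;U)$ is dense in $E_\Phi(0,1;U)$, since the latter embeds continuously into $L^1(0,1;U)$, and since $u\mapsto\int_0^1 T_{-1}(s)Bu(s)\,ds$ is continuous from $L^1(0,1;U)$ into $X_{-1}$, the $X$-valued bounded extension of this map coincides in $X_{-1}$ with $\int_0^1 T_{-1}(s)Bu(s)\,ds$, so the latter lies in $X$ for every $u\in E_\Phi(0,1;U)$. That $E_\Phi$-admissibility on $[0,1]$ propagates to every $t>0$ follows by the standard semigroup (cocycle) argument from $\int_j^{j+1}T_{-1}(s)Bu(s)\,ds=T_{-1}(j)\int_0^1 T_{-1}(\sigma)Bu(\sigma+j)\,d\sigma\in X$, using that translates, restrictions and zero-extensions of $E_\Phi$-functions stay in the appropriate $E_\Phi$-spaces and that $T_{-1}(j)$ restricts to $T(j)\in\mathcal L(X)$ (cf.\ the remark after \eqref{eq:adm2}).

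For the second assertion, $\mu$ being now a Young function, Lemma \ref{lem:3.24b} is not needed. Since siISS already implies strong stability of $(T(t))_{t\ge0}$ by Proposition \ref{proprel1}\ref{prop:ISSit2}, it suffices by Proposition \ref{proprel1}\ref{prop:ISSit1} to prove \emph{infinite-time} $E_\mu$-admissibility. The one delicate point is that the $\beta$-term in \eqref{eq:siISS-estimate} is a priori controlled only for nonzero initial data; I would circumvent this by fixing some $x_1\in X\setminus\{0\}$ (the case $X=\{0\}$ being trivial) and applying \eqref{eq:siISS-estimate} with $x_0=x_1$. With $M:=\sup_{s\ge0}\|T(s)\|<\infty$ (finite by strong stability and the uniform boundedness principle) and $\beta(x_1,t)\le\beta(x_1,0)$, this gives
\[
  \Bigl\|\int_0^t T_{-1}(t-s)Bu(s)\,ds\Bigr\|\le K+\theta\Bigl(\int_0^t\mu(\|u(s)\|_U)\,ds\Bigr),\qquad K:=M\|x_1\|+\beta(x_1,0),
\]
for all $t\ge0$ and all $u\in L^\infty(0,t;U)$, with $K$ independent of $t$; the change of variables $s\mapsto t-s$ transfers the same bound to $\|\int_0^t T_{-1}(s)Bu(s)\,ds\|$. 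Restricting to $\|u\|_{E_\mu(0,t;U)}\le1$ yields $\int_0^t\mu(\|u(s)\|_U)\,ds\le1$ and hence $\|\int_0^t T_{-1}(s)Bu(s)\,ds\|\le K+\theta(1)$, so by homogeneity $\|\int_0^t T_{-1}(s)Bu(s)\,ds\|\le(K+\theta(1))\,\|u\|_{E_\mu(0,t;U)}$ for all $t>0$. Extending from $L^\infty$ to $E_\mu$ by density as above, $\Sigma(A,B)$ is $E_\mu$-admissible with admissibility constants bounded by $K+\theta(1)$ uniformly in $t$, i.e.\ infinite-time $E_\mu$-admissible; Proposition \ref{proprel1}\ref{prop:ISSit1} then yields that $\Sigma(A,B)$ is $E_\mu$-sISS.

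I expect the main obstacle to be the passage from the single-interval estimate to admissibility on every $[0,t]$, together with the bookkeeping that restrictions, translates and zero-extensions of $E_\Phi$- (resp.\ $E_\mu$-) functions stay in the corresponding spaces and that the mild-solution integral is consistent when read in $X$ versus in $X_{-1}$; the second, milder, issue is the uncontrolled $\beta(0,\cdot)$, which is sidestepped above by testing \eqref{eq:siISS-estimate} against a fixed nonzero state rather than against $x_0=0$.
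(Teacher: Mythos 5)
Your proof is correct and follows essentially the same route as the paper's: Lemma \ref{lem:3.24b} combined with the Luxemburg-norm normalization and density of $L^\infty$ in $E_\Phi$ for the first claim, and the siISS estimate together with \cite[Lem.~3.8.4]{Kufner} to get \emph{infinite-time} $E_\mu$-admissibility (hence $E_\mu$-sISS via Proposition \ref{proprel1}) for the second. You merely spell out details the paper delegates to \cite{JNPS16} (the extension from $[0,1]$ to all $t>0$ and the $X$-versus-$X_{-1}$ consistency), and your device of testing \eqref{eq:siISS-estimate} against a fixed nonzero $x_1$ handles the a priori uncontrolled $\beta(0,\cdot)$ term more carefully than the paper's proof, which silently drops it.
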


\begin{proof}
Let $\Phi$ be the Young function given by  Lemma \ref{lem:3.24b}.  Analogous as in the proof of (ii)$\Rightarrow$(i) in \cite[Theorem 3.1]{JNPS16},  but using Lemma \ref{lem:3.24b} instead of \cite[Lemma 3.5]{JNPS16}, we deduce that $\int_{0}^{1}T_{-1}(s)Bu(s)ds\in X$ for all $u\in E_{\Phi}(0,1;U)$.

Now assume that the function $\mu$ in \eqref{eq:siISS-estimate} is a Young function. The admissiblility with respect to $E_\mu$ is now easier to see: For $u \in E_\mu(0,t;U)$ we pick a sequence $(u_n)_{n\in \mathbb N} \subset L^\infty(0,t;U)$ such that $\lim_{n \to \infty}\|u_n-u\|_{E_\mu(0,t;U)}$ and $\|u_n-u_m\|_{E_\mu(0,t;U)}\leq 1$ for all $m,n \in \mathbb N$. Then the siISS-estimate and Lemma 3.8.4 (i) in \cite{Kufner} yield
\begin{equation*}
  \begin{split}
     \left \Vert \int_0^t T_{-1}(s) B (u_n(s)-u_m(s)) \,ds \right \Vert & \leq \theta \left(\int_0^t \mu( \|u_n(s)-u_m(s)\|_U) \, ds \right) \\ & \leq  \theta \left(\|u_n-u_m\|_{E_{\mu}(0,t;U)}  \right).
  \end{split}
\end{equation*}
Hence $(\int_0^t T_{-1}(s) B u_n(s) \,ds)_{n \in \mathbb N}$ is a Cauchy sequence in $X$ and the same argument as above shows that $\int_0^t T_{-1}(s) B u(s) \,ds \in X$ holds. For all $t \geq 0$, $u \in E_\mu(0,t;U)$, $u \neq 0$, we have by Lemma 3.8.4 in \cite{Kufner}
\begin{equation*}
  \begin{split}
      \left\Vert \int_0^{t} T_{-1}(s) B u(s) \,ds  \right\Vert &= \left\Vert \int_0^{t} T_{-1}(s) B \frac{u(s)}{\|u\|_{ E_\mu(0,t;U)}} \,ds \right\Vert \|u\|_{ E_\mu(0,t;U)}\\ & \leq \theta \left(\int_0^{t} \mu \left(\frac{\|u(s)\|_{U}}{\|u\|_{ E_\mu(0,t;U)}}\right) \,ds \right)\|u\|_{ E_\mu(0,t;U)} \\ &\leq \theta(1)\|u\|_{ E_\mu(0,t;U)}.
  \end{split}
\end{equation*}
Hence the system $\Sigma(A,B)$ is infinite-time $E_\mu$-admissible.
\end{proof}
 
 It is well-known that for unbounded intervals $I \subset \mathbb R$ there exist bounded functions in $ L^p(I)$, $p > 1$, which do not belong $L^1(I)$. The following Lemma is an Orlicz-space version of that result.

\begin{lemma}\label{lem:notL1}
  Let $I \subset \mathbb R$ an unbounded interval. Then for each Young function $\Phi$ there exists a strictly positive function $u_0 \in L_\Phi(I) \cap  L^\infty(I)$ with $u_0 \notin L^1(I)$. 
\end{lemma}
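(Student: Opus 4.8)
The plan is to reduce the Orlicz-space statement to the elementary $L^p$ fact by exploiting the growth of $\Phi$. Since $\Phi$ is a Young function, $\lim_{s\to\infty}\varphi(s)=\infty$ and $\varphi(s)>0$ for $s>0$, so $\Phi$ is strictly increasing on $[0,\infty)$ with $\Phi(0)=0$ and $\Phi(s)>0$ for $s>0$. The idea is to pick a strictly positive function $u_0$ which decays exactly fast enough that $\Phi(u_0)$ is integrable but $u_0$ itself is not.

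First I would, without loss of generality, take $I=[a,\infty)$ for some $a\in\mathbb R$ (the case of a left-unbounded interval is symmetric, and an interval unbounded in both directions contains a right-unbounded one; restricting $u_0$ by zero — or rather by a fixed small positive constant on the bounded complementary piece, to keep strict positivity — is harmless since that piece has finite measure and bounded functions on it lie in $L_\Phi\cap L^\infty\cap L^1$). After a shift I may assume $a=1$. Next I define $u_0$ on $[1,\infty)$ by a formula that inverts $\Phi$: for instance $u_0(x) = \Phi^{-1}\!\bigl(1/x^2\bigr)$, where $\Phi^{-1}$ denotes the (continuous, strictly increasing) inverse of $\Phi$ on $[0,\infty)$. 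Then $u_0$ is strictly positive and continuous, hence measurable; it is bounded because $1/x^2\le 1$ on $[1,\infty)$ gives $u_0(x)\le \Phi^{-1}(1)<\infty$, and $u_0$ is in fact nonincreasing. Moreover $\int_1^\infty \Phi(u_0(x))\,dx = \int_1^\infty x^{-2}\,dx = 1<\infty$, so $u_0\in L_\Phi(I)$ (indeed the Luxemburg norm is at most $1$), and clearly $u_0\in L^\infty(I)$.

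The remaining point is that $u_0\notin L^1(I)$, i.e.\ $\int_1^\infty \Phi^{-1}(x^{-2})\,dx = \infty$. This is the one place a genuine argument is needed, and it is the main obstacle since a general Young function can grow arbitrarily fast, making $\Phi^{-1}$ decay arbitrarily slowly near $0$ — which is exactly what we want, but it must be quantified. The cleanest route: substitute $t = x^{-2}$, so $x = t^{-1/2}$ and $dx = -\tfrac12 t^{-3/2}\,dt$, whence $\int_1^\infty \Phi^{-1}(x^{-2})\,dx = \tfrac12\int_0^1 \Phi^{-1}(t)\,t^{-3/2}\,dt$. Now use that $\Phi^{-1}(t)/t\to\infty$ as $t\searrow 0$: this holds because $\Phi(s)/s = \tfrac1s\int_0^s\varphi\to 0$ as $s\searrow 0$ (by $\varphi(0)=0$ and right-continuity of $\varphi$), and $\Phi^{-1}$ is its inverse, so $\Phi^{-1}(t)/t$ is the reciprocal of $\Phi(s)/s$ evaluated at $s=\Phi^{-1}(t)\searrow 0$. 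Hence there is $\delta\in(0,1]$ with $\Phi^{-1}(t)\ge t$ for $t\in(0,\delta]$, giving $\int_0^\delta \Phi^{-1}(t)t^{-3/2}\,dt \ge \int_0^\delta t^{-1/2}\,dt$, which is finite — so this crude bound is not enough and I must be sharper.

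A better choice fixes this: replace the exponent $2$ by $1$, i.e.\ set $u_0(x) = \Phi^{-1}(1/x)$ on $[1,\infty)$, but first replace $\Phi$ by a faster Young function. Concretely, pick a Young function $\Psi$ with $\Psi\ge\Phi$ and $\Psi(s)\ge s^2\Phi(1)$ for $s\ge 1$ — e.g.\ $\Psi(s)=\Phi(s)$ for $s\le 1$ suitably continued, or simply invoke that one may always dominate $\Phi$ by a Young function growing at least quadratically (as in the construction $\Psi(x)=\Phi(x^2)$ used in Lemma \ref{lem:Phi1}). Then $L_\Psi(I)\subset L_\Phi(I)$, so it suffices to produce $u_0\in L_\Psi(I)\cap L^\infty(I)\setminus L^1(I)$; replacing $\Phi$ by $\Psi$, we may assume $\Phi(s)\ge c s^2$ for $s\ge 1$ with $c>0$, i.e.\ $\Phi^{-1}(t)\le \sqrt{t/c}$ for $t\ge \Phi(1)$. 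Define $u_0(x)=\Phi^{-1}(1/x)$; then $u_0$ is strictly positive, bounded by $\Phi^{-1}(1)$, nonincreasing, $\int_1^\infty\Phi(u_0(x))\,dx=\int_1^\infty x^{-1}\cdot$ — no, that diverges. So instead keep $\int_1^\infty\Phi(u_0(x))\,dx=\int_1^\infty x^{-2}\,dx$ finite by defining $u_0(x)=\Phi^{-1}(x^{-2})$, and estimate $\int_1^\infty u_0 = \tfrac12\int_0^1\Phi^{-1}(t)t^{-3/2}\,dt$; here the point of having arranged $\Phi^{-1}(t)\le\sqrt{t/c}$ would make this integral converge — the wrong direction. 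The correct conclusion is therefore the reverse: to force non-integrability I want $\Phi^{-1}$ as large as possible, so I should leave $\Phi$ alone and instead note that for the integral $\int_1^\infty \Phi^{-1}(x^{-2})\,dx$ to diverge it suffices that $\Phi^{-1}(t)\ge \sqrt t$ near $0$, i.e.\ $\Phi(s)\le s^2$ near $0$, which fails for slowly-growing $\Phi$. Hence I should tune the exponent to $\Phi$ itself: since $\Phi(s)/s\to 0$ as $s\searrow0$, choose a sequence $s_n\searrow 0$ with $\Phi(s_n)\le 4^{-n}s_n$, set $t_n=\Phi(s_n)$, and on the intervals $[1/t_{n+1},1/t_n)$ (whose lengths sum appropriately) build a step function $u_0$ equal to roughly $s_n$; then $\int\Phi(u_0)\le\sum (1/t_{n+1})\cdot t_n$ can be made finite by passing to a sparser subsequence, while $\int u_0 \ge \sum (1/t_{n+1}-1/t_n)s_n$ diverges because $s_n/t_n = s_n/\Phi(s_n)\ge 4^n\to\infty$. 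Smoothing or just leaving $u_0$ as this strictly-positive step function (add $2^{-n}$ on the $n$-th block to keep it strictly positive while preserving both integral estimates) completes the construction; the main obstacle, as flagged, is choosing the subsequence $s_n$ so that $\sum (1/t_{n+1})t_n<\infty$ and $\sum(1/t_{n+1}-1/t_n)s_n=\infty$ simultaneously, which is a routine but careful diagonal/greedy selection using only $s_n/\Phi(s_n)\to\infty$.
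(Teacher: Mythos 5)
Your guiding idea --- exploiting $\lim_{s\searrow0}\Phi(s)/s=0$ to build a strictly positive step function whose blocks each contribute a fixed amount to $\int u_0$ but a summable amount to $\int\Phi(u_0)$ --- is exactly the idea of the paper's proof. However, the construction you finally settle on does not work, and the selection problem you defer as ``routine but careful'' is unsolvable in the framework you impose. You tile the half-line by the intervals between consecutive points $1/t_n$ with $t_n=\Phi(s_n)$ and assign the value $s_n$ there. The $n$-th block then contributes $(1/t_{n+1}-1/t_n)\Phi(s_n)=t_n/t_{n+1}-1$ to $\int_I\Phi(u_0)$, and since $x-1\ge\ln x$ for $x\ge1$,
\[
\sum_{n}\Bigl(\frac{t_n}{t_{n+1}}-1\Bigr)\ \ge\ \sum_{n}\ln\frac{t_n}{t_{n+1}}\ =\ \ln t_1-\lim_{N\to\infty}\ln t_{N+1}\ =\ +\infty,
\]
because $s_n\to0$ forces $t_n=\Phi(s_n)\to0$. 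So no choice of the sequence $(s_n)$ --- sparser or otherwise --- makes this sum finite; your cruder bound $\sum (1/t_{n+1})t_n$ is even worse off, as every term already exceeds $1$. (For $\Phi(s)=s^2$ one checks directly that the resulting $u_0$ fails to lie in $L_\Phi$ under any Luxemburg rescaling.) This is precisely the discretization of the dead end you had already ruled out, namely $u_0=\Phi^{-1}(1/x)$ with $\int_1^\infty\Phi(u_0(x))\,dx=\int_1^\infty x^{-1}\,dx=\infty$; tying the block lengths to the tiling by the points $1/\Phi(s_n)$ reintroduces it.

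The missing step is to decouple the block length from that tiling and take it to be the reciprocal of the \emph{value} $s_n$ rather than of $\Phi(s_n)$. This is what the paper does: choose $t_k\in(0,1)$ with $\Phi(t_k)/t_k\le2^{-k}$, partition $I$ (which has infinite measure) into disjoint measurable sets $I_k$ with $\lambda(I_k)=t_k^{-1}$, and set $u_0=\sum_k t_k\chi_{I_k}$. Each block then contributes exactly $t_k\cdot t_k^{-1}=1$ to $\int_I u_0$, giving divergence, and $\Phi(t_k)\cdot t_k^{-1}\le2^{-k}$ to $\int_I\Phi(u_0)$, giving $\int_I\Phi(u_0)\le2$ and hence $u_0\in L_\Phi(I)$; boundedness and strict positivity are immediate since $0<t_k<1$ and the sets $I_k$ cover $I$. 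With this one change (and dropping the unnecessary ``add $2^{-n}$'' correction, since the step function is already strictly positive once the blocks cover $I$), your argument closes.
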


\begin{proof}
For any Young function $\Phi$ holds $\lim_{t \to 0} \Phi(t)/t = 0$. Hence there is a sequence $(t_k)_{k \in \mathbb N} \subset (0,1)$ such that for all $k \in \mathbb N$ we have
    \begin{equation*}
      \frac{\Phi(t_k)}{t_k} \leq 2^{-k}.
    \end{equation*}
Since $I$ is unbounded there is a sequence $(I_k)_{k \in \mathbb N}$ of measurable disjoint sets $I_k \subset \mathbb R$ with \[I = \bigcup_{k\in \mathbb N} I_k\] and $\lambda(I_k) = t_k^{-1}$. 
We define $u_0 \colon I \to \mathbb R$ by $u_0 = \sum_{k \in \mathbb N} t_k \chi_{I_k}$. 
Then $u_0 \in L^\infty(I)$. Further we have
\begin{equation*}
  \int_I |u_0(x)| \,dx = \sum_{k= 0}^\infty t_k \lambda(I_k) = \sum_{k= 0}^\infty 1 = \infty
\end{equation*}
and 
\begin{equation*}
  \int_\Omega \Phi(|u_0(x)|) \,dx = \sum_{k= 0}^\infty \Phi(t_k) \lambda(I_k) \leq \sum_{k= 0}^\infty 2^{-k} = 2.
\end{equation*}
Hence we have $u_0 \notin L^1(I)$ and $u \in L_\Phi(I)$.
\end{proof}
The following Lemma is an integral version of the well-known fact that there is no series which diverges less rapidly than any other \cite[p.~299]{Knopp}.
\begin{lemma}
  \label{lem:funct-h}
For all $f \in L^\infty(0,\infty)\setminus L^1(0,\infty)$, $f > 0$, there exists a continuously differentiable decreasing function $h \colon (0,\infty) \to [0,\infty)$ such that $\lim_{t\to \infty} h(t) = 0$ and $\int_0^\infty h(s) f(s) \,ds = \infty$.
\end{lemma}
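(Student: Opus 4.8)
The plan is to take $h$ to be a suitably smoothed reciprocal of a primitive of $f$. First I would put $F(t)=\int_0^t f(s)\,ds$ and $G=1+F$. Since $f\in L^\infty(0,\infty)$ and $f>0$ a.e., $G$ is Lipschitz (with constant $\|f\|_\infty$), strictly increasing, satisfies $G\ge 1$, and $G(t)\to\infty$ as $t\to\infty$ because $f\notin L^1(0,\infty)$. The function $1/G$ already has nearly everything we want: it is strictly decreasing, takes values in $(0,1]$, tends to $0$, and, since $\tfrac{d}{ds}\ln G(s)=f(s)/G(s)$ a.e.\ and $\ln G$ is locally absolutely continuous,
\[
  \int_0^\infty \frac{f(s)}{G(s)}\,ds=\lim_{T\to\infty}\bigl(\ln G(T)-\ln G(0)\bigr)=\lim_{T\to\infty}\ln\bigl(1+F(T)\bigr)=\infty .
\]
The only defect of $1/G$ is that it is merely Lipschitz, not $C^1$.

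To repair this I would mollify $G$. Fix $\rho\in C_c^\infty(\R)$ with $\rho\ge 0$, $\int_\R\rho=1$, and $\operatorname{supp}\rho\subset[0,1]$, and set
\[
  \widetilde G(t)=\int_0^1 G(t+s)\,\rho(s)\,ds,\qquad t\ge 0 .
\]
As a convolution of a locally integrable function with a smooth compactly supported kernel, $\widetilde G$ is $C^\infty$; moreover $\widetilde G'(t)=\int_0^1 f(t+s)\,\rho(s)\,ds>0$ for all $t\ge 0$, so $\widetilde G$ is strictly increasing, and since $G$ is nondecreasing one gets the two-sided bound $G(t)\le\widetilde G(t)\le G(t+1)$ for every $t\ge 0$. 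I would then define $h:=1/\widetilde G$ on $(0,\infty)$.

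It remains to check the three asserted properties of $h$, and all but one step is routine. Smoothness and monotonicity are immediate: $h\in C^\infty\bigl((0,\infty)\bigr)$, $h'=-\widetilde G'/\widetilde G^{\,2}<0$, and $0<h\le 1$ since $\widetilde G\ge G\ge 1$; also $h(t)\to 0$ because $\widetilde G(t)\ge G(t)\to\infty$. The one point that needs genuine care — and the only place where boundedness of $f$, rather than just $f\notin L^1$, is used — is the observation that mollification distorts $G$ only by a bounded factor: from $\widetilde G(t)\le G(t+1)=G(t)+\int_t^{t+1}f\le G(t)+\|f\|_\infty$ and $G(t)\ge 1$ we get $\widetilde G(t)\le(1+\|f\|_\infty)\,G(t)$. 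With this in hand, the first-paragraph computation yields
\[
  \int_0^\infty h(s)f(s)\,ds=\int_0^\infty\frac{f(s)}{\widetilde G(s)}\,ds\ge\frac{1}{1+\|f\|_\infty}\int_0^\infty\frac{f(s)}{G(s)}\,ds=\infty ,
\]
which finishes the argument. I expect this last comparison — showing that the smoothing does not destroy the divergence of the integral — to be the only real (and still minor) obstacle.
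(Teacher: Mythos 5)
Your proof is correct, and it reaches the same target function as the paper ($h\approx 1/\int_0^{\cdot}f$) by a genuinely different route. The paper discretizes: it sets $c_n=\int_n^{n+1}f$, invokes Knopp's Abel--Dini theorem to get divergence of $\sum_n c_n\bigl(\sum_{k\le n}c_k\bigr)^{-1}$, and then asserts the existence of a $C^1$ decreasing interpolant $h$ with $d_{n+1}\le h|_{[n,n+1]}\le d_n$. You instead prove the continuous Abel--Dini statement directly and from scratch, via $\int_0^T f/G=\ln G(T)\to\infty$ for $G=1+\int_0^{\cdot}f$, and you obtain smoothness by an explicit forward mollification of $G$ rather than by an unspecified interpolation. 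What your version buys is self-containedness (no citation to Knopp, no appeal to the existence of a smooth monotone interpolant) and an explicit formula for $h$; the small price is the extra comparison $\widetilde G(t)\le G(t)+\|f\|_\infty\le(1+\|f\|_\infty)G(t)$, which you correctly identify as the one step where $f\in L^\infty$ (beyond local integrability) is actually used --- the paper's discrete version does not need that bound. All the individual steps check out: $G$ is Lipschitz hence locally absolutely continuous, so the logarithmic computation is legitimate; the one-sided mollifier keeps $\widetilde G$ between $G(t)$ and $G(t+1)$; and $h=1/\widetilde G$ is smooth, strictly decreasing, tends to $0$, and satisfies $\int_0^\infty hf=\infty$.
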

\begin{proof}
  For $n \in \mathbb N$ let $c_n= \int_n^{n+1} f(s) \,ds$. Then we have $\sum_{n =0}^\infty c_n = \infty$ and  by \cite[p.~299]{Knopp} the series $\sum_{n =0}^\infty c_{n}d_n$ is also divergent, where $d_n:= (\sum_{k= 0}^n c_n)^{-1}$. Since the function $f$ is positiv, the sequence $(d_n)_{n \in \mathbb N}$ is strictly decreasing and hence there exists a continuously differentiable decreasing function $h \colon (0,\infty) \to [0,\infty)$ such that $d_{n+1} \leq h|_{[n,n+1]} \leq d_{n}$ for all $n \in \mathbb N$. It is clear that $h(t)\searrow0$ as $t\to\infty$ and $\int_0^\infty h(s) f(s) \,ds = \infty$.
\end{proof}

The following theorem shows that $E_\Phi$-infinite-time admissibility and $L^\infty$-strong integral input-to-state stability are not equivalent, i.e.\ we cannot drop the Young function condition in the second part of Theorem \ref{thm:main2c}. Note that, by \cite[Theorem 15]{JNPS16}, a linear system $\Sigma(A,B)$ is $L^\infty$-iISS if and only if it is infinite-time admissible with respect to $E_\Phi$ for some Young function $\Phi$. In contrast, Theorem \ref{thm:main2b} and the following result show that without the exponential stability of the semigroup, sISS with respect to $E_{\Phi}$ is a stronger notion than siISS. 

\begin{theorem}\label{thm:counterex}
There is a system $\Sigma(A,B)$ such that
\begin{enumerate}
\item $\Sigma(A,B)$ is infinite-time admissible with respect to $L^1$, in particular $\Sigma(A,B)$ is siISS with respect to $L^1$ and hence siISS.
\item  $\Sigma(A,B)$ is not $E_\Phi$-sISS for any Young function $\Phi$.
\end{enumerate}
Moreover $\Sigma(A,B)$ is not infinite-time admissible with respect to $L^\infty$. In particular siISS does not imply $L^\infty$-sISS.
\end{theorem}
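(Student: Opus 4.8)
The plan is to exhibit one explicit and very simple diagonal system and to check the three properties, all of which are decided by a single short computation.

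\emph{The system.} Let $X=U=\ell^2(\mathbb N)$ with standard orthonormal basis $(e_n)_{n\ge1}$, let $A=\diag(-1/n)_{n\ge1}$ (a bounded operator on $X$), and let $B=I\in\mathcal L(U,X)$. Then $(T(t))_{t\ge0}=\diag(e^{-t/n})_{n\ge1}$ is a contraction $C_0$-semigroup; it is strongly stable, since for $x\in X$ one has $\|T(t)x\|^2=\sum_n e^{-2t/n}|x_n|^2\to0$ as $t\to\infty$ by dominated convergence, but it is not exponentially stable, since $\|T(t)\|=\sup_n e^{-t/n}=1$. As $B$ is bounded, $\Sigma(A,B)$ is (finite-time) $Z$-admissible for each of the spaces $Z$ in play, so by Proposition~\ref{proprel1} it only remains to decide, for each $Z$, whether the optimal admissibility constants $c(t)$ stay bounded. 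For Part~(1): for $t\ge0$ and $u\in L^1(0,t;U)$, contractivity gives $\bigl\|\int_0^t T(t-s)u(s)\,ds\bigr\|\le\int_0^t\|u(s)\|_U\,ds=\|u\|_{L^1(0,t;U)}$, so $c_\infty\le1$ and $\Sigma(A,B)$ is infinite-time $L^1$-admissible; together with strong stability, Proposition~\ref{proprel1}(i) gives $L^1$-sISS, Proposition~\ref{proprel2a} upgrades this to $L^1$-siISS, and since $L^\infty(0,t;U)\subset L^1(0,t;U)$ on bounded intervals the $L^1$-siISS estimate in particular gives siISS.

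\emph{Parts (2) and (3).} Fix $N\in\mathbb N$, set $t_N:=N$, and take the constant input $u=e_N\chi_{(0,t_N)}$. Then
\[
  x(t_N)=\int_0^{t_N}T(t_N-s)e_N\,ds=N\bigl(1-e^{-t_N/N}\bigr)e_N=(1-e^{-1})\,N\,e_N ,
\]
hence $\|x(t_N)\|=(1-e^{-1})\,t_N$, while $\|u\|_{L^\infty(0,t_N;U)}=\|e_N\|_U=1$. Letting $N\to\infty$ shows $\sup_t c(t)=\infty$ for $Z=L^\infty$, so $\Sigma(A,B)$ is not infinite-time $L^\infty$-admissible and, by Proposition~\ref{proprel1}(i), not $L^\infty$-sISS; with Part~(1) this yields that siISS does not imply $L^\infty$-sISS. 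For (2), fix a Young function $\Phi$; since $\|u(s)\|_U\equiv1$ on $(0,t_N)$,
\[
  \|u\|_{E_\Phi(0,t_N;U)}=\|\chi_{(0,t_N)}\|_{L_\Phi(0,t_N)}=\frac1{\Phi^{-1}(1/t_N)},
\]
so $\|x(t_N)\|/\|u\|_{E_\Phi(0,t_N;U)}=(1-e^{-1})\,t_N\,\Phi^{-1}(1/t_N)$. Every Young function satisfies $\Phi(y)/y\to0$ as $y\to0^+$, equivalently $\Phi^{-1}(z)/z\to\infty$ as $z\to0^+$; hence $t_N\Phi^{-1}(1/t_N)\to\infty$, the ratio diverges as $N\to\infty$, $\Sigma(A,B)$ is not infinite-time $E_\Phi$-admissible, and by Proposition~\ref{proprel1}(i) it is not $E_\Phi$-sISS. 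Since $\Phi$ was arbitrary, (2) follows.

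\emph{Where the difficulty sits.} The construction is very short, and the only genuinely substantive point is the elementary asymptotic $t\,\Phi^{-1}(1/t)\to\infty$, which holds for every Young function precisely because such a function, although it may approach linearity near the origin arbitrarily fast, is never linear there; this is exactly why the Young-function hypothesis in the second part of Theorem~\ref{thm:main2c} cannot be dropped. If one insists on a finite-dimensional input space together with a bounded $B$ of convolution type, the same phenomenon can still be produced, and it is there that Lemmas~\ref{lem:notL1} and~\ref{lem:funct-h} enter: one chooses a slowly decaying, non-integrable kernel for the input-to-state map and a bounded, non-$L^1$ probe function lying in $L_\Phi$, so that the state still escapes every $E_\Phi$-ball while the $L^1$-bound remains automatic.
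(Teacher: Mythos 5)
Your proof is correct, and it reaches the conclusion by a genuinely different and more elementary route than the paper. The paper works with the left-translation semigroup on $L^1(0,\infty)$ with $B=I$ and must manufacture a suitable input: Lemma \ref{lem:notL1} supplies a bounded, non-integrable $u_0\in L_\Phi(0,\infty)$, Lemma \ref{lem:funct-h} supplies a decay profile $h$ with $\int_0^\infty u_0 h=\infty$, and these are combined into an $L^1(0,\infty)$-valued input whose state norm equals its (divergent) $L^1$-in-time norm while its $E_\Phi$- and $L^\infty$-norms stay bounded uniformly in $t$. You instead take the diagonal, strongly but non-uniformly stable contraction semigroup $\diag(e^{-t/n})_{n\ge 1}$ on $\ell^2$ with $B=I$ and probe it with the single family $u=e_N\chi_{(0,N)}$: the state grows like $N$, while $\|u\|_{E_\Phi(0,N;U)}=1/\Phi^{-1}(1/N)=o(N)$ because $\Phi(y)/y\to 0$ as $y\to 0^+$ --- the same elementary fact about Young functions that the paper exploits inside Lemma \ref{lem:notL1}, here deployed directly. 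Every step checks out: contractivity gives infinite-time $L^1$-admissibility with $c_\infty\le 1$, Propositions \ref{proprel1} and \ref{proprel2a} upgrade this to $L^1$-siISS and hence siISS, and the explicit ratio $(1-e^{-1})\,N\,\Phi^{-1}(1/N)\to\infty$ rules out infinite-time $E_\Phi$- and $L^\infty$-admissibility, hence $E_\Phi$- and $L^\infty$-sISS via Proposition \ref{proprel1}\ref{prop:ISSit1}. What your approach buys is brevity and transparency: one unit-norm constant input per time horizon decides all three claims, with no auxiliary lemmas. What the paper's approach buys is a concrete transport-type example and two reusable lemmas for non-exponentially-stable settings. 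One small correction to your closing aside: the paper's input space is also infinite-dimensional ($U=X=L^1(0,\infty)$), so Lemmas \ref{lem:notL1} and \ref{lem:funct-h} are not there to accommodate a finite-dimensional $U$; they are needed because the counterexample input must simultaneously lie in $L^\infty\cap L_\Phi$ in time and fail to be integrable.
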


\begin{proof}
Let $(T(t))_{t \geq 0}$ be the left-translation semigroup on $X= L^1(0,\infty)$, i.e. $(T(t)f)(s) = f(t+s)$, $f \in X$, which  is strongly stable. The generator is given by
  \begin{equation*}
   Af:= f', \qquad  D(A) = \{f \in L^1(0,\infty) \mid f \in AC(0,\infty) \text{ and } f' \in L^1(0,\infty)\},
  \end{equation*}
see e.g. \cite{engelnagel}. We choose $U= X=  L^1(0,\infty)$ as input space and $B = I$ as control operator. 
System $\Sigma(A,B)$ is infinite-time $L^{1}$-admissible  because for any $u \in L^1(0,t;X)$ we have 
    \begin{equation*}
      \begin{split}
        \left \|\int_0^t T(s) B u(s) \,ds  \right \|_X & \leq  \int_0^t \|T(s) B u(s)\|_{X} \,ds \\ 
        &\leq \int_0^t  \| u(s)\|_{X}  \,ds\\ 
        &= \|u\|_{L^1(0,t;L^{1}(0,\infty))}.
      \end{split}
    \end{equation*}
Now let us fix a Young function $\Phi$.
 In order to show that $\Sigma(A,B)$ is not $E_{\Phi}$-infinite-time admissible, we construct a function $u$ in the following way. Let $u_0\in L_{\Phi}(0,\infty)\cap L^{\infty}(0,\infty)$ be given by Lemma \ref{lem:notL1} (with $I=(0,\infty)$) and let $h$ be given by Lemma \ref{lem:funct-h} applied to $f:= u_0$. Now set $g=-h'$ and define $u \colon (0,\infty) \to L^1 (0,\infty)$,
  \begin{equation*}
    [u(s)](r) = g(r) \chi_{[s,\infty)}(r) u_0(s),
  \end{equation*}
  which is well-defined since for $s\in(0,\infty)$, $\int_s^\infty |g(r)| \,dr=h(s)$ and
\begin{equation*}
  \|u\|_{L^{1}(0,t;X)}=\int_0^t u_0(s) \int_s^\infty |g(r)| \,dr \, ds = \int_{0}^{t}u_{0}(s)h(s)\, ds.
\end{equation*}
Hence, the restriction of $u$ to the interval $[0,t]$ belongs to $L^1(0,t;L^1(0,\infty))$ for all $t\geq 0$ but $u \notin L^1(0,\infty;L^1(0,\infty))$. We obtain using that $[u(s)](r) \geq 0$ for all $r,s >0$ 
and  $[u(s)](r) = 0$ for all $r \in [0,s)$ together with Fubini's theorem,
\begin{equation*}
   \left \|\int_0^t T(s) B u(s) \,ds  \right \|_X = \|u\|_{L^1(0,t;X)}.
\end{equation*}
Since $u_0 \in L^\infty(0,\infty)$ and for all $s > 0$ 
\begin{equation}\label{eq:inequ}
  \begin{split}
    \|u(s)\|_X = \int_0^\infty [u(s)](r) \,dr = u_0(s) \int_s^\infty g(r) \,dr \leq u_0(s) h(s)
  \end{split}
\end{equation}
we have that $u \in L^\infty(0,\infty;X)$ and
\begin{equation}\label{eq:u-in-L-infty}
  \|u\|_{L^\infty(0,\infty;X)} \leq \|u_0\|_{L^\infty(0,\infty)}\|h\|_{L^\infty(0,\infty)}.
\end{equation}
Therefore, $u|_{[0,t]} \in E_\Phi(0,t;X)$ and by \eqref{eq:inequ} follows that  
\begin{equation}\label{eq:u-in-E-Phi}
  \|u\|_{E_\Phi(0,t;X)} \leq \|h\|_{L^{\infty}(0,\infty)} \|u_0\|_{E_\Phi(0,t)} \leq \|h\|_{L^{\infty}(0,\infty)} \|u_0\|_{L_\Phi(0,\infty)}. 
\end{equation}
If $\Sigma(A,B)$ was infinite-time $E_{\Phi}$-admissibility, \eqref{eq:u-in-E-Phi} would lead to
\begin{equation*}
   \|u\|_{L^1(0,t;X)}=\left \|\int_0^t T(s) B u(s) \,ds  \right \|_X \leq c_\infty \|u\|_{E_\Phi(0,t;X)} \leq c_{\infty} \|g\|_{\infty} \|u_0\|_{L_\Phi(0,\infty)}
\end{equation*}
for some $c_{\infty}>0$ independent of $u$ and $t$. Letting $t\to\infty$, this gives a contradiction as $\|u\|_{L^1(0,t;X)}$ tends to $\infty$. 
Using \eqref{eq:u-in-L-infty} instead of \eqref{eq:u-in-E-Phi} we similarly can similarly derive a contradiction for infinite-time $L^\infty$-admissibility.
\end{proof}

A Young function $\Phi$ \textit{satisfies the $\Delta_{2}$-condition} if there exist a $k>0$ and $s_0 \geq 0$ such that 
$\Phi(2s)\le k \Phi(s)$ for all $s \geq 0$. The following result complements \cite[Thm.~3.2]{JNPS16}.

\begin{theorem}
Let $\Phi$ be a Young function which satisfies the $\Delta_2$-condition with $s_0=0$. If the system $\Sigma(A,B)$ is $E_\Phi$-sISS then it is  $E_\Phi$-siISS.
\end{theorem}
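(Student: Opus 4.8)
The plan is to exploit the norm–modular comparison that the $\Delta_2$-condition with $s_0=0$ makes available on every interval, with constants depending only on $\Phi$. By Proposition~\ref{proprel1}, $E_\Phi$-sISS is equivalent to infinite-time $E_\Phi$-admissibility together with strong stability of $(T(t))_{t\ge0}$; fix the corresponding uniform admissibility constant $c_\infty<\infty$ (cf.\ \eqref{eq:adm2}), so that $\|\int_0^tT_{-1}(s)Bu(s)\,ds\|\le c_\infty\|u\|_{E_\Phi(0,t;U)}$ for all $t\ge0$ and $u\in E_\Phi(0,t;U)$. Since $\Phi$ satisfies $\Delta_2$ globally, $E_\Phi(0,t;U)=L_\Phi(0,t;U)$ and every $u$ in this space has finite modular $\int_0^t\Phi(\|u(s)\|_U)\,ds$, so the right-hand side of the desired estimate \eqref{eq:siISS-estimate} (with $Z=E_\Phi$) is meaningful for all admissible inputs.

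First I would record the growth bound forced by $\Delta_2$ with $s_0=0$: iterating $\Phi(2s)\le k\Phi(s)$ yields $\Phi(\lambda s)\le k\,\lambda^{p}\,\Phi(s)$ for all $\lambda\ge1$ and $s\ge0$, where $p:=\log_2 k\ge1$ (note $k\ge 2$ by convexity of $\Phi$). From this I would extract a single continuous nondecreasing function $\eta\colon[0,\infty)\to[0,\infty)$ with $\eta(0)=0=\lim_{r\searrow0}\eta(r)$ and $\eta(r)\to\infty$ as $r\to\infty$ such that
\[
  \|u\|_{L_\Phi(I;U)}\;\le\;\eta\!\Big(\textstyle\int_I\Phi(\|u(s)\|_U)\,ds\Big)
\]
for \emph{every} interval $I$ and every $u\in L_\Phi(I;U)$. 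Writing $\rho(u)$ for the modular: if $\rho(u)\le 1/k$, then $k_0:=(k\,\rho(u))^{1/p}\le1$ satisfies $\rho(u/k_0)\le k\,k_0^{-p}\rho(u)=1$ by the growth bound, so $\|u\|_\Phi\le(k\,\rho(u))^{1/p}$; if $\rho(u)\ge1$, then convexity of $\Phi$ with $\Phi(0)=0$ gives $\Phi(s/\rho(u))\le\Phi(s)/\rho(u)$, hence $\rho(u/\rho(u))\le1$ and $\|u\|_\Phi\le\rho(u)$; in the remaining range $1/k<\rho(u)<1$ one has $\|u\|_\Phi\le1$ directly. Patching these three estimates defines such an $\eta$, and by \cite[Lemma~2.5]{CLS98} it is dominated by some $\tilde\theta\in\mathcal K_\infty$.

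With this in hand, the conclusion follows by applying the admissibility estimate to the mild solution \eqref{eqn2}: for all $t\ge0$, $x_0\in X$ and $u\in E_\Phi(0,t;U)$,
\[
  \|x(t)\|\;\le\;\|T(t)x_0\|+\Big\|\int_0^tT_{-1}(s)Bu(s)\,ds\Big\|
  \;\le\;\|T(t)x_0\|+c_\infty\,\tilde\theta\!\Big(\textstyle\int_0^t\Phi(\|u(s)\|_U)\,ds\Big).
\]
Taking $\mu:=\Phi$ (a Young function, hence in $\mathcal K$), $\beta(x_0,t):=\|T(t)x_0\|$ (which lies in $\mathcal L$ for $x_0\neq0$ by strong stability), and $\theta\in\mathcal K_\infty$ with $\theta\ge c_\infty\tilde\theta$ yields precisely \eqref{eq:siISS-estimate}, i.e.\ $E_\Phi$-siISS.

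The only point requiring care is obtaining $\eta$ with constants uniform over the intervals $(0,t)$, whose lengths are unbounded; this is exactly where the global form of $\Delta_2$ (the hypothesis $s_0=0$) enters, and it is consistent with Theorem~\ref{thm:counterex}, which shows that no such conclusion can hold for arbitrary $\Phi$. An essentially equivalent route would mirror the proof of Theorem~\ref{thm:main2b}: set $\theta(\alpha)=\sup\{\,\|\int_0^tT_{-1}(s)Bu(s)\,ds\|\mid t\ge0,\ u\in E_\Phi(0,t;U),\ \int_0^t\Phi(\|u(s)\|_U)\,ds\le\alpha\,\}$, deduce finiteness of $\theta$ from $\|u\|_\Phi\le\max(1,\alpha)$, and deduce $\lim_{\alpha\searrow0}\theta(\alpha)=0$ from the implication $\rho(u_n)\to0\Rightarrow\|u_n\|_\Phi\to0$, which here is immediate from the growth bound (so, in contrast to Theorem~\ref{thm:main2b}, Lemma~\ref{lem:MoTr} is not needed).
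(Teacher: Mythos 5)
Your proof is correct, and it reaches the conclusion by a genuinely different execution of the key step than the paper does. Both arguments share the same skeleton: reduce $E_\Phi$-sISS to infinite-time $E_\Phi$-admissibility plus strong stability via Proposition \ref{proprel1}, and then control $\left\Vert\int_0^t T_{-1}(s)Bu(s)\,ds\right\Vert$ by a $\mathcal K_\infty$-function of the modular $\int_0^t\Phi(\|u(s)\|_U)\,ds$. The paper handles this last step \emph{softly}: it defines $\theta(\alpha)$ as a supremum over all inputs with modular at most $\alpha$, takes a sequence of near-optimizers $u_n$ with modulars $\alpha_n\searrow 0$, and invokes Lemma 3.10.4 in \cite{Kufner} (under $\Delta_2$, mean convergence implies norm convergence) to conclude $\|u_n\|_{L_\Phi}\to 0$ and hence $\theta(\alpha_n)\to 0$, before upgrading $\theta$ to a $\mathcal K_\infty$-function via \cite[Lemma 2.5]{CLS98}. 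You instead make the $\Delta_2$ hypothesis quantitative: iterating $\Phi(2s)\le k\Phi(s)$ to get $\Phi(\lambda s)\le k\lambda^p\Phi(s)$ for $\lambda\ge 1$ with $p=\log_2 k$, and then performing the three-case scaling analysis on the Luxemburg norm, yields an explicit modular-to-norm comparison $\|u\|_{L_\Phi(I;U)}\le\eta(\rho(u))$ that is uniform over all intervals $I$ --- which is exactly the point where $s_0=0$ is needed, as you correctly flag. Your computations check out (in particular $k\ge 2$ by convexity, so $p\ge 1$ and $k_0=(k\rho(u))^{1/p}\le 1$ in the first case, and $\Phi\in\mathcal K_\infty$ so $\mu=\Phi$ is admissible in \eqref{eq:siISS-estimate}). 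What your route buys is an essentially explicit gain function $\theta$ of power type near the origin and the elimination of the external qualitative lemma from \cite{Kufner}; what the paper's route buys is brevity and uniformity with the proof of Theorem \ref{thm:main2b}, whose structure it deliberately mirrors. Your closing remark --- that one could alternatively run the paper's sup-construction and replace Lemma \ref{lem:MoTr} by the implication $\rho(u_n)\to 0\Rightarrow\|u_n\|_\Phi\to 0$ obtained from your growth bound --- is exactly the bridge between the two arguments.
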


\begin{proof}
Similarly to the proof of Theorem \ref{thm:main2b}, we consider a nondecreasing function $\theta \colon [0,\infty) \to [0,\infty)$ defined by $\theta(0) = 0$ and 
\begin{multline*}
  \theta (\alpha) = \sup\left\{ \left\Vert \int_0^t T_{-1}(s) B u(s) \,ds  \right\Vert \Bigm| u \in E_\Phi (0,t;U),~t \geq 0,\right.\\ \left. \int_0^t \Phi(\|u(s)\|_U) \,ds \leq \alpha \right\},
\end{multline*}
for $\alpha >0$. It follows as in Theorem \ref{thm:main2b} that $\theta$ is well-defined and non-decreasing. As in the proof of \ref{thm:main2b}, it remains to show that $\theta$ is continuous in 0. This follows from the $\Delta_2$-condition. Indeed, let $(\alpha_{n})_{n \in \mathbb N}$ be a sequence of positive real numbers converging to $0$. By the definition of $\theta$, for any $n \in \mathbb N$ there exist $t_n \geq 0$ and $u_{n}\in L_\Phi(0,t_n;U)$ such that 
\begin{equation*}
  \int_0^{t_n} \Phi(\|u_n(s)\|_U) \,ds < \alpha_n
\end{equation*}
and 
\begin{equation*}
  \left|\theta(\alpha_{n})- \left \Vert \int_{0}^{t_n}T_{-1}(s)Bu_{n}(s) \,ds\right \Vert \right|<\frac{1}{n}.
\end{equation*}
By extending the functions $u_n$ to $[0,\infty)$ by $0$, we can assume that $(u_n)_{n \in \mathbb N} \subset  L_\Phi (0,\infty;U)$ and both estimates above hold with $t_n= \infty$.
It follows that the sequence $(\|u_{n}(\cdot)\|)_{n \in \mathbb N}$ is $\Phi$-mean convergent to zero. Hence, by Lemma 3.10.4 in \cite{Kufner} it converges to zero in $L_\Phi(0,\infty; U)$, for $\Phi$ satisfies the $\Delta_2$-condition. 
By $E_\Phi$-infinite-time admissibility we conclude that $\lim_{n \to \infty} \theta(\alpha_n) = 0$.
\end{proof}

\section{Concluding remarks}
We would like to remark that for our results the strong stability of the semigroup generated by $A$ is not really needed. Indeed we could replace this condition by boundedness of the semigroup and the sISS estimate or, respectively, the siISS estimate with initial value zero. Note that this differs from the situation of exponentially stable semigroups in \cite{JNPS16}, where finite-time admissibility is equivalent to infinite-time admissibility because of exponential stability.\\
 As explained before, until now it is not clear whether $L^{\infty}$-ISS and iISS are equivalent for linear systems. In the strong setting, we have seen that this does not hold as Theorem \ref{thm:counterex} shows that the implication siISS $\implies$ $L^{\infty}$-sISS fails in general. This behavior is different from the ISS case. We believe that also the other implication, $L^{\infty}$-sISS $\implies$ siISS, fails in general. Constructing a counterexample is subject to future work.

\def\cprime{$'$}

\end{document}